\newcommand{\R}{\mathbb{R}}
\newcommand{\kernel}{\mathcal{K}}
\renewcommand{\d}{\operatorname{d}\!}
\newcommand{\bs}{\boldsymbol}
\DeclareMathOperator{\diam}{diam}
\DeclareMathOperator{\spn}{span}
\DeclareMathOperator\dist{dist}
\DeclareMathOperator\supp{supp}
\pgfplotsset{compat=newest}
\newtheorem{theorem}{Theorem}[section]
\newtheorem{lemma}[theorem]{Lemma}
\newtheorem{corollary}[theorem]{Corollary}
\newtheorem{definition}[theorem]{Definition}
\newtheorem{remark}[theorem]{Remark}
\def\letters{a,b,c,d,e,f,g,h,i,j,k,l,m,n,o,p,q,r,s,t,u,v,w,x,y,z}
\def\Letters{A,B,C,D,E,F,G,H,I,J,K,L,M,N,O,P,Q,R,S,T,U,V,W,X,Y,Z}
\Letters \do{%
  \expandafter\edef\csname\@l bb\endcsname{\noexpand\ensuremath{\noexpand\mathbb{\@l}}}%
  \expandafter\edef\csname\@l bf\endcsname{{\noexpand\bf \@l}}%
  \expandafter\edef\csname\@l cal\endcsname{\noexpand\ensuremath{\noexpand\mathcal{\@l}}}%
  \expandafter\edef\csname\@l eu\endcsname{\noexpand\ensuremath{\noexpand\EuScript{\@l}}}%
  \expandafter\edef\csname\@l frak\endcsname{\noexpand\ensuremath{\noexpand\mathfrak{\@l}}}%
  \expandafter\edef\csname\@l rm\endcsname{{\noexpand\rm \@l}}%
  \expandafter\edef\csname\@l scr\endcsname{\noexpand\ensuremath{\noexpand\mathscr{\@l}}}%
}
\letters \do{%
  \expandafter\edef\csname\@l bf\endcsname{{\noexpand\bf \@l}}%
  \expandafter\edef\csname\@l frak\endcsname{\noexpand\ensuremath{\noexpand\mathfrak{\@l}}}%
  \expandafter\edef\csname\@l scr\endcsname{\noexpand\ensuremath{\noexpand\mathscr{\@l}}}%
}
 \definecolor{shadecolor}{rgb}{0.6, 0.6, 0.6} 
  \definecolor{red}{rgb}{0,0,0} 
  \definecolor{darkgreen}{rgb}{0, 0.6, 0}
\newcommand{\isdef}{\mathrel{\mathrel{\mathop:}=}}
\newcommand{\defis}{\mathrel{=\mathrel{\mathop:}}}
\begin{document}
\title{Samplets: A new paradigm for data compression}
\author{Helmut Harbrecht}
\address{Helmut Harbrecht,
Departement f\"ur Mathematik und Informatik, 
Universit\"at Basel, 
Spiegelgasse 1, 4051 Basel, Switzerland.}
\email{helmut.harbrecht@unibas.ch}
\author{Michael Multerer}
\address{
Michael Multerer,
Euler Institute,
USI Lugano,
Via la Santa 1, 6962 Lugano, Svizzera.}
\email{michael.multerer@usi.ch}

\begin{abstract}
In this article, we introduce the concept of samplets 
by transferring the construction of Tausch-White wavelets 
\cite{TW03} to the realm of data. This way we obtain a
multilevel representation of discrete data which directly
enables data compression, detection of singularities
and adaptivity. Applying samplets to represent kernel 
matrices, as they arise in kernel based 
learning or Gaussian process regression, we end up with 
quasi-sparse matrices. By thresholding
small entries, these matrices are compressible to
\(\mathcal{O}(N\log N)\) relevant entries, where \(N\)
is the number of data points. This feature allows
for the use of fill-in reducing reorderings to obtain
a sparse factorization of the compressed matrices.
Besides the comprehensive introduction to samplets and
their properties, we present extensive numerical studies
to benchmark the approach.
Our results demonstrate that samplets mark a considerable
step in the direction of making large data sets
accessible for analysis.
\end{abstract}

\maketitle
\section{Introduction}\label{sec:intro}
Wavelet techniques have a long standing history in the field of data
science. Applications comprise signal processing, image analysis and
machine learning, see for instance
\cite{Chui,Dahmen,Daubechies,Mallat,Mallat2016}
and the references therein. Assuming a signal generated by some
function, the pivotal idea of wavelet techniques is the splitting of
this function into its contributions with respect to a
hierarchy of scales. Such a multiscale ansatz starts from an
approximation on a relatively coarse scale and successively resolves
details at finer scales. Hence, compression and adaptive
representation are inherently built into this ansatz. The transformation
of a given signal into its wavelet representation and the inverse
transformation can be performed with linear cost in terms of the degrees
of freedom.

Classically, wavelets are constructed by refinement relations and
therefore require a sequence of nested approximation spaces which are
copies of each other, except for a different scaling. This restricts the
concept of wavelets to structured data. Some adaption of the general
principle is possible in order to treat intervals, bounded domains and
surfaces, compare \cite{Alp93,DKU,HS,PSS97,Quak,STE} for example. The 
seminal work \cite{TW03} by Tausch and White overcomes this obstruction by
constructing wavelets as suitable linear combinations of functions at a
given fine scale. In particular, the stability of the resulting basis,
which is essential for numerical algorithms is guaranteed by
orthonormality.

In this article, we take the concept of wavelets to the next level and
consider discrete, unstructured data. To this end, we modify the construction of
Tausch and White and construct a multiscale basis which
consists of localized and discrete signed measures. Inspired by the term
wavelet, we call such signed measures \emph{samplets}. Samplets can be
constructed such that their associated measure integrals vanish for
polynomial integrands. If this is the case for all polynomials of total
degree less or equal than \(q\), we say that the samplets have
\emph{vanishing moments} of order $q+1$. We remark that lowest order
samplets, i.e.\ \(q=0\), have been considered earlier for data
compression in \cite{RE11}. Another concept for constructing multiscale
bases on data sets are \emph{diffusion wavelets}, which employ a
diffusion operator to construct the multiscale hierarchy, see
\cite{CM06}. In contrast to diffusion wavelets, however, the
construction of samplets is solely based on discrete structures and can
always be performed with linear cost for a balanced cluster tree, even
for non-uniformly distributed data.

When representing discrete data by samplets, then, due to the vanishing
moments, there is a fast decay of the corresponding samplet coefficients
with respect to the support size if the data are smooth. This
straightforwardly enables data compression. In contrast, non-smooth
regions in the data are indicated by large samplet coefficients. This,
in turn, enables singularity detection and extraction. Furthermore, the
construction of samplets is not limited to the use of polynomials.
Indeed, it is easily be possible to adapt  the construction to other
primitives with different desired properties.

The second application of samplets we consider is compression of kernel
matrices, as they arise in kernel based machine learning and scattered
data approximation, compare \cite{Fasshauer2007,HSS08,Rasmussen2006,%
Schaback2006,Wendland2004,Williams1998}. Kernel matrices are typically
densely populated, since the underlying kernels are nonlocal.
Nonetheless, these kernels are usually \emph{asymptotically smooth},
meaning  that they behave like smooth functions apart from the diagonal.
A discretization of an asymptotical smooth kernel with respect to a
samplet basis with  vanishing moments results in quasi-sparse kernel
matrices, which means that they can be compressed such that only a
sparse matrix remains, compare \cite{BCR,DHS,DPS,PS,SCHN}. Especially,
it has been demonstrated in \cite{HM} that nested dissection, see
\cite{Geo73,LRT79}, is applicable in order to obtain a fill-in reducing
reordering of the matrix in the standard form. This reordering in turn 
allows for the rapid factorization of the system matrix by the Cholesky
factorization without introducing additional errors. \textcolor{red}{This 
is in contrast to the approximate computation of the Cholesky factorization 
with respect to the so-called non-standard form of operators or
by $\mathcal{H}$-matrices which has been proposed earlier, compare
\cite{GBD98,HACK}.}

The asymptotic smoothness of the kernels is also exploited by cluster
methods, like the fast multipole method, see \cite{GR,RO,YBZ04} and
particularly \cite{MXTY+2015} for high-dimensional data. However,
these methods do not allow for the direct and exact factorization, which
is for example advantageous for the simulation of Gaussian
random fields. A further approach, which is more in line of the present
work, is the use of \emph{gamblets}, see \cite{Owh17}, for the
compression of the kernel matrix, cp.\ \cite{SSO21}. Different from
the discrete construction of samplets with vanishing
moments, the construction of gamblets is adapted to an underlying
pseudo-differential operator and basis functions need to be truncated
in order to obtain localized supports,
while localized supports are automatically obtained
by the samplet construction.

As samplets are directly constructed with respect to a discrete data
set, their applications are manifold. Within this article, we
particularly consider time-series data, image data, kernel matrix
representation and the simulation of Gaussian random fields as examples.
We remark, however, that we do not claim to have invented a new method for 
high-dimensional data approximation. The current construction is based
on total degree polynomials and is hence not dimension robust, thus
limited to data of moderate dimension. Even so, we believe that samplets
provide most of the advantages of other approaches for scattered data,
while being easy to implement. Especially, most of the algorithms
available for wavelets with vanishing moments are
transferable.

The rest of this article is organized as follows. In
Section~\ref{section:Samplets}, the concept of samplets is introduced.
The subsequent Section~\ref{sct:construction} is devoted to the actual
construction of samplets and to their  properties. The change of basis
by means of the discrete samplet transform is the topic of
Section~\ref{sec:FST}. In Section \ref{sec:Num1}, we demonstrate the
capabilities of samplets for data compression and smoothing for data in
one, two and three dimensions. Section~\ref{sec:kernelCompression} deals 
with the samplet compression of kernel matrices. Especially, we also
employ an interpolation based \(\Hcal^2\)-matrix approach in order to
efficiently assemble the compressed kernel matrix. Corresponding
numerical results are then presented in Section \ref{sec:Num2} for up
to four dimensions. Finally, in Section~\ref{sec:Conclusion}, we state
concluding remarks. 

\section{Samplets}
\label{section:Samplets}
Let \(X\isdef\{{\bs x}_1,\ldots,{\bs x}_N\}\subset\Omega\) 
denote a set of points within some region \(\Omega\subset\Rbb^d\).
Associated to each point \({\bs x}_i\), we introduce
the Dirac measure 
\[
\delta_{{\bs x}_i}({\bs x})\isdef
\begin{cases}
1,&\text{if }{\bs x}={\bs x}_i\\
0,&\text{otherwise}.
\end{cases}
\]
With a slight abuse of notation, we also introduce the
point evaluation functional
\[
(f,\delta_{{\bs x}_i})_\Omega=\int_\Omega
f({\bs x})\delta_{{\bs x}_i}({\bs x})\d{\bs x}\isdef
\int_{\Omega}f({\bs x})\delta_{{\bs x}_i}(\d{\bs x})
=f({\bs x}_i),
\]
where $f\in C(\Omega)$ is a continuous function.

Next, we define the space
\(V\isdef\spn\{\delta_{{\bs x}_1},\ldots,\delta_{{\bs x}_N}\}\)
as the \(N\)-dimensional vector space
of all discrete and finite signed measures supported at the
points in \(X\).
An inner product on \(V\) is defined by
\[
\langle u,v\rangle_V\isdef\sum_{i=1}^N u_iv_i,\quad\text{where }
u=\sum_{i=1}^Nu_i\delta_{{\bs x}_i},\ v=\sum_{i=1}^Nv_i\delta_{{\bs x}_i}.
\]
Indeed, the space \(V\) is isometrically isomorphic to \(\Rbb^N\)
endowed with the canonical inner product.
Similar to the
idea of a multiresolution analysis in the construction of
wavelets, we introduce the spaces \(V_j\isdef\spn{\bs \Phi_j}\), where
\[
{\bs\Phi_j}\isdef\{\varphi_{j,k}:k\in\Delta_j\}.
\]
Here, $\Delta_j$ denotes a suitable 
index set with cardinality $|\Delta_j|=\dim V_j$ and
\(j\in\Nbb\) is referred to as \emph{level}. 
Moreover, each basis element \(\varphi_{j,k}\) is a linear
combination of Dirac measures
such that
\[
\langle \varphi_{j,k},\varphi_{j,k'}\rangle_V=0\quad\text{for }k\neq k'.
\]

For the sake of notational convenience, we shall identify bases
by row vectors,
such that, for ${\bs v}_j 
= [v_{j,k}]_{k\in\Delta_j}$, the corresponding measure 
can simply be written as a dot product according to
\[
v_j = \mathbf\Phi_j{\bs v}_j=\sum_{k\in\Delta_j} v_{j,k}\varphi_{j,k}.
\]

Rather than using the multiresolution 
analysis corresponding to the hierarchy
\[
V_0\subset V_1\subset\cdots\subset V,
\]
the idea of samplets is
to keep track of the increment of information 
between two consecutive levels $j$ and $j+1$. Since we have
$V_{j}\subset V_{j+1}$, we may decompose 
\begin{equation}\label{eq:decomposition}
V_{j+1} = V_j\overset{\perp}{\oplus} S_j
\end{equation}
by using the \emph{detail space} $S_j$. Of practical interest 
is the particular choice of the basis of the detail space $S_j$ in $V_{j+1}$. 
This basis is assumed to be orthonormal as well and will be denoted by
\[
  {\bs\Sigma}_j = \{\sigma_{j,k}:k\in\nabla_j\isdef\Delta_{j+1}
  \setminus \Delta_j\}.
\]
Recursively applying the decomposition \eqref{eq:decomposition},
we see that the set
\[
\mathbf\Sigma_J = {\bs\Phi}_0\cup \bigcup_{j=0}^{J-1}{\bs\Sigma}_j
\]
forms a basis of \(V_J\isdef V\), which we call a \emph{samplet basis}. 
In view of data compression, an essential ingredient is the vanishing moment
condition, meaning that
\begin{equation}\label{eq:vanishingMoments}
 (p,\sigma_{j,k})_\Omega
 = 0\quad \text{for all}\ p\in\Pcal_q(\Omega),
\end{equation}
where \(\Pcal_q(\Omega)\) denotes the space of all polynomials
with total degree at most \(q\).
We say then that the samplets have $q+1$ \emph{vanishing 
moments}.

\begin{remark}
In case of uniformly distributed points, we can obtain bases
which satisfy
\[
\diam(\supp\varphi_{j,k})\isdef
\diam(\{{\bs x}_{i_1},\ldots,{\bs x}_{i_p}\})\sim 2^{-j/d}
\]
and, likewise,
\begin{equation}\label{eq:localized}
\diam(\supp\sigma_{j,k})\sim 2^{-j/d}.
\end{equation}
These properties are favorable with regard to the 
compression of data and kernel matrices. 
However, we stress that this is not a requirement in our construction.
\end{remark}

\begin{remark}
The concept of samplets has a very natural interpretation 
in the context of reproducing kernel Hilbert spaces, compare
\cite{Aronszajn50}. If \((\Hcal,\langle\cdot,\cdot\rangle_{\Hcal})\)
is a reproducing kernel Hilbert space with reproducing kernel
\(\kernel\), then there holds
\((f,\delta_{{\bs x}_i})_\Omega
=\langle \kernel({\bs x}_i,\cdot),f\rangle_{\Hcal}\). Hence,
the samplet
\(\sigma_{j,k}=\sum_{\ell=1}^p\beta_\ell\delta_{{\bs x}_{i_\ell}}\)
can directly be identified with the function
\[
\hat{\sigma}_{j,k}\isdef
\sum_{\ell=1}^p\beta_\ell \kernel({\bs x}_{i_\ell},\cdot)\in\mathcal{H}.
\]
In particular, it holds
\[
\langle\hat{\sigma}_{j,k},h\rangle_\Hcal=0
\]
for any \(h\in\Hcal\) which satisfies
\(h|_{\supp\sigma_{j,k}}\in\Pcal_q(\supp\sigma_{j,k})\).
\end{remark}

\section{Construction of samplets}\label{sct:construction}
\subsection{Cluster tree}
In order to construct samplets with the desired properties, 
especially vanishing moments, cf.\ \eqref{eq:vanishingMoments},
we shall transfer the wavelet construction of Tausch and 
White from \cite{TW03} into our setting. The first step is to 
construct a hierarchy subspaces of signed measures.
To this end, we perform a hierarchical 
clustering on the set \(X\).

\begin{definition}\label{def:cluster-tree}
Let $\mathcal{T}=(P,E)$ be a tree with vertices $P$ and edges $E$.
We define its set of leaves as
\[
\mathcal{L}(\mathcal{T})\isdef\{\nu\in P\colon\nu~\text{has no sons}\}.
\]
The tree $\mathcal{T}$ is a \emph{cluster tree} for
the set $X=\{{\bs x}_1,\ldots,{\bs x}_N\}$, iff
the set $X$ is the \emph{root} of $\mathcal{T}$ and
all $\nu\in P\setminus\mathcal{L}(\mathcal{T})$
are disjoint unions of their sons.

The \emph{level} \(j_\nu\) of $\nu\in\mathcal{T}$ is its distance from the root,
i.e.\ the number of son relations that are required for traveling from
$X$ to $\nu$. The \emph{depth} \(J\) of \(\Tcal\) is the maximum level
of all clusters. We define the set of clusters
on level $j$ as
\[
\mathcal{T}_j\isdef\{\nu\in\mathcal{T}\colon \nu~\text{has level}~j\}.
\]
Finally, the \emph{bounding box} $B_{\nu}$ of \(\nu\)
is defined as the smallest axis-parallel cuboid that 
contains all its points.
\end{definition}

There exist several possibilities for the choice of a
cluster tree for the set \(X\). However, within this article,
we will exclusively consider binary trees and remark that it is of course
 possible to consider other options, such as
\(2^d\)-trees, with the obvious modifications.
Definition~\ref{def:cluster-tree} provides a hierarchical cluster
structure on the set \(X\). Even so, it does not provide guarantees 
for the cardinalities of the clusters.
Therefore, we introduce the concept
of a balanced binary tree.

\begin{definition}
Let $\Tcal$ be a cluster tree on $X$ with depth $J$. 
$\Tcal$ is called a \emph{balanced binary tree}, if all 
clusters $\nu$ satisfy the following conditions:
\begin{enumerate}
\item
The cluster $\nu$ has exactly two sons
if $j_{\nu} < J$. It has no sons if $j_{\nu} = J$.
\item
It holds $|\nu|\sim 2^{J-j_{\nu}}$.
\end{enumerate}
\end{definition}

A balanced binary tree can be constructed by \emph{cardinality 
balanced clustering}. This means that the root cluster 
is split into two son clusters of identical (or similar)
cardinality. This process is repeated recursively for the 
resulting son clusters until their cardinality falls below a 
certain threshold.
For the subdivision, the cluster's bounding box
is split along its longest edge such that the 
resulting two boxes both contain an equal number of points.
Thus, as the cluster cardinality halves with each level, 
we obtain $\mathcal{O}(\log N)$ levels in total. 
The total cost for constructing the cluster tree
is therefore $\mathcal{O}(N \log N)$. Finally, we remark that a
balanced tree is only required to guarantee the cost bounds
for the presented algorithms. The error and compression estimates
we shall present later on are robust in the sense that they
are formulated directly in terms of the actual cluster sizes
rather than the introduced cluster level.

\subsection{Multiscale hierarchy}
Having a cluster tree at hand, we 
shall now construct a samplet basis on the resulting 
hierarchical structure. We begin by introducing a \emph{two-scale} 
transform between basis elements on a cluster $\nu$ of level $j$. 
To this end, we create \emph{scaling functions} $\mathbf{\Phi}_{j}^{\nu} 
= \{ \varphi_{j,k}^{\nu} \}$ and \emph{samplets} $\mathbf{\Sigma}_{j}^{\nu} 
= \{ \sigma_{j,k}^{\nu} \}$ as linear combinations of the scaling 
functions $\mathbf{\Phi}_{j+1}^{\nu}$ of $\nu$'s son clusters. 
This results in the \emph{refinement relation}
\begin{equation}\label{eq:refinementRelation}
   [ \mathbf{\Phi}_{j}^{\nu}, \mathbf{\Sigma}_{j}^{\nu} ] 
 \isdef 
 \mathbf{\Phi}_{j+1}^{\nu}
 {\bs Q}_j^{\nu}=
 \mathbf{\Phi}_{j+1}^{\nu}
 \big[ {\bs Q}_{j,\Phi}^{\nu},{\bs Q}_{j,\Sigma}^{\nu}\big].
\end{equation}

In order to provide both, vanishing moments and orthonormality,
the transformation \({\bs Q}_{j}^{\nu}\) has to be
appropriately constructed. For this purpose, we consider an orthogonal
decomposition of the \emph{moment matrix}
\[
  {\bs M}_{j+1}^{\nu}\isdef
  \begin{bmatrix}({\bs x}^{\bs 0},\varphi_{j+1,1})_\Omega&\cdots&
  ({\bs x}^{\bs 0},\varphi_{j+1,|\nu|})_\Omega\\
  \vdots & & \vdots\\
  ({\bs x}^{\bs\alpha},\varphi_{j+1,1})_\Omega&\cdots&
  ({\bs x}^{\bs\alpha},\varphi_{j+1,|\nu|})_\Omega
  \end{bmatrix}=
  [({\bs x}^{\bs\alpha},\mathbf{\Phi}_{j+1}^{\nu})_\Omega]_{|\bs\alpha|\le q}
\in\Rbb^{m_q\times|\nu|},
\]
where
\begin{equation}\label{eq:mq}
m_q\isdef\sum_{\ell=0}^q{\ell+d-1\choose d-1}={q+d\choose d}\leq(q+1)^d
\end{equation}
denotes the dimension of \(\Pcal_q(\Omega)\).

In the original construction by
Tausch and White, the matrix \({\bs Q}_{j}^{\nu}\) is obtained
from a singular value decomposition of \({\bs M}_{j+1}^{\nu}\).
For the construction of samplets, we follow the idea
form \cite{AHK14} and rather
employ the QR decomposition, which has the advantage of generating
samplets with an increasing number of vanishing moments.
It holds
\begin{equation}\label{eq:QR} 
  ({\bs M}_{j+1}^{\nu})^\intercal  = {\bs Q}_j^\nu{\bs R}
  \defis\big[{\bs Q}_{j,\Phi}^{\nu} ,
  {\bs Q}_{j,\Sigma}^{\nu}\big]{\bs R}
 \end{equation}
Consequently, the moment matrix 
for the cluster's own scaling functions and samplets is then
given by
\begin{equation}\label{eq:vanishingMomentsQR}
  \begin{aligned}
  \big[{\bs M}_{j,\Phi}^{\nu}, {\bs M}_{j,\Sigma}^{\nu}\big]
  &= \left[({\bs x}^{\bs\alpha},[\mathbf{\Phi}_{j}^{\nu},
  	\mathbf{\Sigma}_{j}^{\nu}])_\Omega\right]_{|\bs\alpha|\le q}
  = \left[({\bs x}^{\bs\alpha},\mathbf{\Phi}_{j+1}^{\nu}[{\bs Q}_{j,\Phi}^{\nu}
  , {\bs Q}_{j,\Sigma}^{\nu}])_\Omega
  	\right]_{|\bs\alpha|\le q} \\
  &= {\bs M}_{j+1}^{\nu} [{\bs Q}_{j,\Phi}^{\nu} , {\bs Q}_{j,\Sigma}^{\nu} ]
  = {\bs R}^\intercal.
  \end{aligned}
\end{equation}
As ${\bs R}^\intercal$ is a lower triangular matrix, the first $k-1$ 
entries in its $k$-th column are zero. This corresponds to 
$k-1$ vanishing moments for the $k$-th function generated 
by the transformation
${\bs Q}_{j}^{\nu}=[{\bs Q}_{j,\Phi}^{\nu} , {\bs Q}_{j,\Sigma}^{\nu} ]$. 
By defining the first $m_{q}$ functions as scaling functions and 
the remaining ones as samplets, we obtain samplets with vanishing 
moments at least up to order $q+1$. By increasing
the polynomial degree to \(\hat{q}> q\) at the leaf clusters
such that \(m_{\hat{q}}\geq 2m_q\), we can even construct 
samplets with an increased number of vanishing moments up to order \(\hat{q}+1\)
without any additional cost. 

\begin{remark}
We remark that the samplet construction using vanishing moments
is inspired by the classical wavelet theory. However, it is easily
possible to adapt the construction to other primitives of interest.
\end{remark}

\begin{remark}
\label{remark:introCQ}
Each cluster has at most a constant number of scaling 
functions and samplets: For a particular cluster $\nu$, their number 
is identical to the cardinality of $\mathbf{\Phi}_{j+1}^{\nu}$. For leaf 
clusters, this number is bounded by the leaf size. 
For non-leaf clusters, it is bounded by the number of scaling functions 
provided from all its son clusters. As there are at most two 
son clusters with a maximum of $m_q$ scaling functions each, 
we obtain the bound $2 m_q$ for non-leaf clusters. Note that, 
if $\mathbf{\Phi}_{j+1}^{\nu}$ has at most $m_q$ elements, a 
cluster will not provide any samplets at all and all functions 
will be considered as scaling functions.
\end{remark}

For leaf clusters, we define the scaling functions by
the Dirac measures supported at the points \({\bs x}_i\), i.e.\
$\mathbf{\Phi}_J^{\nu}\isdef\{ \delta_{{\bs x}_i} : {\bs x}_i\in\nu \}$.
The scaling functions of all clusters on a specific level $j$ 
then generate the spaces
\begin{equation}\label{eq:Vspaces}
	V_{j}\isdef \spn\{ \varphi_{j,k}^{\nu} : k\in \Delta_j^\nu,\ \nu \in\Tcal_{j} \},
\end{equation}
while the samplets span the detail spaces
\begin{equation}\label{eq:Wspaces}
	S_{j}\isdef
	\spn\{ \sigma_{j,k}^{\nu} : k\in \nabla_j^\nu,\ 
	\nu \in \Tcal_{j} \} =
	V_{j+1}\overset{\perp}{\ominus} V_j.
\end{equation}
Combining the scaling functions of the root cluster with all 
clusters' samplets gives rise to the samplet basis
\begin{equation}\label{eq:Wbasis}
  \mathbf{\Sigma}_{N}\isdef\mathbf{\Phi}_{0}^{X} 
  	\cup \bigcup_{\nu \in T} \mathbf{\Sigma}_{j}^{\nu}.
\end{equation}

Writing $\mathbf{\Sigma}_{N}
= \{ \sigma_{k} : 1 \leq k \leq N \}$, where 
$\sigma_{k}$ is either a samplet or a scaling function 
at the root cluster, we can establish a unique indexing of 
all the signed measures comprising the samplet 
basis. The indexing induces an order on the 
basis set $\mathbf{\Sigma}_{N}$, which we choose 
to be level-dependent: Samplets belonging to a particular 
cluster are grouped together, with those on finer levels 
having larger indices.

\begin{remark}\label{remark:waveletLeafSize}
We remark that the samplet basis on a balanced 
cluster tree can be computed in cost $\mathcal{O}(N)$, 
we refer to \cite{AHK14} for a proof of this statement.
\end{remark}

\subsection{Properties of the samplets}
By construction, samplets satisfy the following 
properties, which can directly be inferred from
the corresponding results in \cite{HKS05,TW03}.

\begin{theorem}\label{theo:waveletProperties}
The spaces $V_{j}$ defined in equation \eqref{eq:Vspaces} 
exhibit the desired multiscale hierarchy
\[
  V_0\subset V_1\subset\cdots\subset V_J = V,
\]
where the corresponding complement spaces $S_{j}$ from \eqref{eq:Wspaces} 
satisfy $V_{j+1}=V_j\overset{\perp}{\oplus} S_{j}$ for all $j=0,1,\ldots,
J-1$. The associated samplet basis $\mathbf{\Sigma}_{N}$ defined in 
\eqref{eq:Wbasis} forms an orthonormal basis of $V$. 
In particular, there holds:
\begin{enumerate}
\item[(\emph{i})] The number of all samplets on level $j$ behaves like $2^j$.
\item[(\emph{ii})] The samplets have $q+1$ vanishing moments.
\item[(\emph{iii})] 
Each samplet is supported in a specific cluster $\nu$. 
\end{enumerate}
\end{theorem}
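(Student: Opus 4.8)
The plan is to verify the three asserted properties in turn, deriving each from the explicit construction of the two-scale transform $\mathbf{Q}_j^\nu$ and the structure of the cluster tree, and then to establish the multiscale hierarchy and orthonormality claims as consequences of the recursive application of the refinement relation \eqref{eq:refinementRelation}.

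First I would settle the multiscale hierarchy and orthonormality, which underpin everything else. For the nesting $V_j\subset V_{j+1}$: by definition \eqref{eq:Vspaces}, $V_j$ is spanned by the scaling functions $\varphi_{j,k}^\nu$ over all $\nu\in\Tcal_j$, and by the refinement relation \eqref{eq:refinementRelation} each such $\varphi_{j,k}^\nu$ is a linear combination of the scaling functions $\mathbf{\Phi}_{j+1}^\nu$ of $\nu$'s sons, which lie on level $j+1$; hence $V_j\subset V_{j+1}$, and an analogous argument at the leaves gives $V_J=V$. For orthonormality: the columns of $\mathbf{Q}_j^\nu$ form an orthonormal system because $\mathbf{Q}_j^\nu$ is the orthogonal factor of a $QR$ decomposition, cf.\ \eqref{eq:QR}; since the scaling functions $\mathbf{\Phi}_{j+1}^\nu$ are orthonormal by induction (with Dirac measures at the leaves being orthonormal with respect to $\langle\cdot,\cdot\rangle_V$ by definition), applying an orthogonal transform preserves orthonormality within each cluster, and orthonormality across different clusters on the same level follows because their scaling functions have disjoint supports. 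This yields $V_{j+1}=V_j\overset{\perp}{\oplus}S_j$ and, by telescoping the decomposition \eqref{eq:decomposition}, that $\mathbf{\Sigma}_N$ is an orthonormal basis of $V$.

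Property~(2) is immediate from \eqref{eq:vanishingMomentsQR}: since $[\mathbf{M}_{j,\Phi}^\nu,\mathbf{M}_{j,\Sigma}^\nu]=\mathbf{R}^\intercal$ is lower triangular, the samplet columns (those with index exceeding $m_q$) have their first $m_q$ moment entries equal to zero, giving at least $q+1$ vanishing moments. Property~(1): by Remark~\ref{remark:introCQ} each cluster contributes at most a constant number ($2m_q$) of samplets, and on a balanced binary tree there are $|\Tcal_j|\sim 2^j$ clusters on level $j$ by definition of the balanced tree together with $|\nu|\sim 2^{J-j_\nu}$; hence the count of samplets on level $j$ is $\sim 2^j$. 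Property~(3): each samplet $\sigma_{j,k}^\nu$ is by construction a linear combination of Dirac measures supported at points of $\nu$, so $\supp\sigma_{j,k}^\nu\subseteq\nu$; for uniformly distributed points, the cardinality-balanced splitting halves the number of points at each level and splits bounding boxes along their longest edge, so after $j_\nu$ subdivisions the box $B_\nu$ has volume $\sim 2^{-j_\nu}|B_X|$, whence $\diam(\nu)\le\diam(B_\nu)\sim 2^{-j_\nu/d}$ and \eqref{eq:localized} follows.

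The main obstacle I anticipate is making the geometric estimate in Property~(3) fully rigorous: the cardinality-balanced split controls the \emph{number} of points in each box but not directly the box's aspect ratio, so one must argue that splitting always along the longest edge keeps the bounding boxes uniformly non-degenerate (bounded aspect ratio) under the uniform-distribution assumption, and only then does halving the volume translate into the stated scaling of the diameter. The remaining steps are bookkeeping: carefully tracking indices in the refinement relation and invoking the cited results \cite{HKS05,TW03,AHK14} for the parts that transfer verbatim from the wavelet setting.
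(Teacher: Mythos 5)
Your proposal is correct and follows exactly the route the paper intends: the paper itself gives no detailed argument, stating only that the properties follow by construction from the results in \cite{HKS05,TW03}, and your plan fills in precisely those construction-based details (nesting via the refinement relation, orthonormality from the orthogonal QR factor and disjoint cluster supports, vanishing moments from the lower-triangular ${\bs R}^\intercal$, the $2^j$ count from the balanced binary tree, and localization from cardinality-balanced splitting under uniform distribution). Your flagged concern about bounding-box aspect ratios in Property~(3) is the right technical point to nail down, and it is handled as you suggest by the longest-edge splitting rule together with the uniformity assumption.
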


\begin{remark}
In the situation of Theorem~\ref{theo:waveletProperties},
if the points in $X$ are even uniformly distributed, then the 
diameter of the cluster satisfies $\diam(\nu)\sim 
2^{-j_\nu/d}$ and it holds \eqref{eq:localized}. 
\end{remark}

\begin{remark}
Due to $S_j\subset V$ and $V_0\subset V$, 
we conclude that each samplet is a linear combination of the 
Dirac measures supported at the points in $X$. Especially, the 
related coefficient vectors ${\bs\omega}_{j,k}$ in
\begin{equation}\label{eq:coefficientVectorsOfWavelets}
  \sigma_{j,k} = \sum_{i=1}^{N}
  \omega_{j,k,i} \delta_{{\bs x}_i} \quad 
	\text{and} \quad \varphi_{0,k} = \sum_{i=1}^{N}
	\omega_{0,k,i} \delta_{{\bs x}_i}
\end{equation}
are pairwise orthonormal with respect to the inner
product on \(\Rbb^N\).
\end{remark}

Later on, the following bound on the samplets' 
coefficients $\|\cdot\|_1$-norm will
be essential:

\begin{lemma}\label{lemma:waveletL1Norm}
The coefficient vector ${\bs\omega}_{j,k}=\big[\omega_{j,k,i}\big]_i$ of
the samplet $\sigma_{j,k}$ on the cluster $\nu$ fulfills
\begin{equation}\label{eq:ell1-norm}
  \|{\bs\omega}_{j,k}\|_{1}\le\sqrt{|\nu|}.
\end{equation}
The same holds for the scaling functions $\varphi_{j,k}$.
\end{lemma}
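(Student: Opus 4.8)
The plan is to exploit the orthonormality of the coefficient vectors together with the localization of each samplet in a single cluster $\nu$. By construction, $\sigma_{j,k}$ (or $\varphi_{j,k}$) is supported only on the points ${\bs x}_i\in\nu$, so the coefficient vector ${\bs\omega}_{j,k}$ has at most $|\nu|$ nonzero entries. Since $\mathbf{\Sigma}_N$ is an orthonormal basis of $V$ and $V$ is isometrically isomorphic to $\Rbb^N$ with the canonical inner product, the vectors ${\bs\omega}_{j,k}$ are orthonormal in $\Rbb^N$; in particular $\|{\bs\omega}_{j,k}\|_2 = 1$.

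First I would recall the elementary inequality between the $\ell^1$- and $\ell^2$-norms of a vector ${\bs\omega}\in\Rbb^N$ with at most $s$ nonzero entries, namely $\|{\bs\omega}\|_1 \le \sqrt{s}\,\|{\bs\omega}\|_2$, which follows from the Cauchy--Schwarz inequality applied to the product of ${\bs\omega}$ restricted to its support with the all-ones vector on that support. Then I would apply this with $s = |\nu|$, the number of points in the cluster $\nu$ on which $\sigma_{j,k}$ is supported, and use $\|{\bs\omega}_{j,k}\|_2 = 1$ from the orthonormality to conclude
\[
  \|{\bs\omega}_{j,k}\|_1 \le \sqrt{|\nu|}\,\|{\bs\omega}_{j,k}\|_2 = \sqrt{|\nu|}.
\]
The identical argument applies verbatim to the scaling-function coefficient vectors, since they too are members of the orthonormal family and are supported on the points of their respective cluster.

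The only point requiring a little care — and the one I would state explicitly rather than hand-wave — is the claim that $\sigma_{j,k}$ associated with cluster $\nu$ has coefficient support contained in $\nu$; this is exactly the ``each samplet is supported in a specific cluster $\nu$'' statement from Theorem~\ref{theo:waveletProperties}(3), which in turn follows from the refinement relation \eqref{eq:refinementRelation}: the functions on $\nu$ are linear combinations of scaling functions of $\nu$'s descendants, whose Dirac supports are precisely the points of $\nu$. Everything else is the one-line Cauchy--Schwarz estimate, so there is no genuine obstacle here; the lemma is essentially a bookkeeping consequence of orthonormality plus sparsity of the support.
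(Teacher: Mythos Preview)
Your proof is correct and follows essentially the same route as the paper: use orthonormality to get $\|{\bs\omega}_{j,k}\|_2=1$, then apply Cauchy--Schwarz on the cluster $\nu$ to obtain $\|{\bs\omega}_{j,k}\|_1\le\sqrt{|\nu|}\,\|{\bs\omega}_{j,k}\|_2=\sqrt{|\nu|}$. Your additional remark that the support of ${\bs\omega}_{j,k}$ is contained in $\nu$ is the only point the paper leaves implicit, and you justify it correctly via Theorem~\ref{theo:waveletProperties}(3) and the refinement relation.
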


\begin{proof}
It holds $\|{\bs\omega}_{j,k}\|_{\ell^2}=1$. Hence,
the assertion follows immediately from the Cauchy-Schwarz
inequality
\[
\|{\bs\omega}_{j,k}\|_{1}\le\sqrt{|\nu|}\|{\bs\omega}_{j,k}\|_{2}
=\sqrt{|\nu|}.
\]
\end{proof}

The key for data compression and singularity detection 
is the following estimate which shows that the samplet 
coefficients decay with respect to the samplet's level 
provided that the data result from the evaluation of a smooth function. 
Therefore, in case of smooth data, the samplet 
coefficients are small and can be set to zero without 
compromising the accuracy. Vice versa, a large samplet 
coefficients reflects that the data are singular in the 
region of the samplet's support. 

\begin{lemma}\label{lemma:decay}
Let $f\in C^{q+1}(\Omega)$. Then, it holds for
a samplet $\sigma_{j,k}$ supported
on the cluster $\nu$ that
\begin{equation}\label{eq:decay}
 |(f,\sigma_{j,k})_\Omega|\le
  	\diam(\nu)^{q+1}\|f\|_{C^{q+1}(\Omega)}\|{\bs\omega}_{j,k}\|_{1}.
\end{equation}
\end{lemma}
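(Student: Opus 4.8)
The plan is to exploit the vanishing moment property \eqref{eq:vanishingMoments} together with a Taylor expansion of $f$ around a point in the cluster $\nu$. First I would fix an arbitrary reference point ${\bs x}_0\in\nu$ (for instance one of the data points in the support of $\sigma_{j,k}$, or the center of the bounding box $B_\nu$) and write the Taylor expansion of $f$ with integral remainder,
\[
f({\bs x})=p({\bs x})+r({\bs x}),
\]
where $p\in\Pcal_q(\Omega)$ is the Taylor polynomial of degree $q$ of $f$ at ${\bs x}_0$ and $r$ is the remainder term. Since $f\in C^{q+1}(\Omega)$, the remainder satisfies the pointwise bound $|r({\bs x})|\le C_{d,q}\,\|{\bs x}-{\bs x}_0\|^{q+1}\,\|f\|_{C^{q+1}(\Omega)}$ on the relevant set; more precisely, using the multivariate Lagrange form one gets a constant depending only on $d$ and $q$, and one should track it (or absorb it into the statement's implied constants, matching how $\|f\|_{C^{q+1}(\Omega)}$ is normalized in the paper).

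Next I would use linearity of the pairing together with the vanishing moment condition: because $p\in\Pcal_q(\Omega)$, we have $(p,\sigma_{j,k})_\Omega=0$ by \eqref{eq:vanishingMoments}, hence
\[
(f,\sigma_{j,k})_\Omega=(r,\sigma_{j,k})_\Omega
=\sum_{i=1}^N\omega_{j,k,i}\,r({\bs x}_i).
\]
Now the sum runs only over those ${\bs x}_i\in\supp\sigma_{j,k}\subset\nu$, so each contributing point satisfies $\|{\bs x}_i-{\bs x}_0\|\le\diam(\nu)$, giving $|r({\bs x}_i)|\le \diam(\nu)^{q+1}\|f\|_{C^{q+1}(\Omega)}$ up to the constant noted above. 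Applying the triangle inequality and factoring out this uniform bound yields
\[
|(f,\sigma_{j,k})_\Omega|\le \diam(\nu)^{q+1}\|f\|_{C^{q+1}(\Omega)}\sum_{i}|\omega_{j,k,i}|
=\diam(\nu)^{q+1}\|f\|_{C^{q+1}(\Omega)}\|{\bs\omega}_{j,k}\|_1,
\]
which is exactly \eqref{eq:decay}.

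The main subtlety — not really an obstacle, but the point requiring care — is the precise normalization of $\|f\|_{C^{q+1}(\Omega)}$: the clean constant-free bound in \eqref{eq:decay} holds only if the $C^{q+1}$-norm is defined to absorb the combinatorial/Taylor constant (e.g.\ a suitably weighted sum of sup-norms of derivatives), or if one interprets the inequality with an implied generic constant. I would state explicitly which convention is used, so that the factor $m_q$-type or $\binom{q+d}{d}$-type constants from the multi-index Taylor remainder are accounted for. A secondary point is to ensure ${\bs x}_0$ lies in (the convex hull of) the relevant region so that the Lagrange remainder applies; choosing ${\bs x}_0$ inside $B_\nu$ and using that $B_\nu$ is convex handles this, and the bound $\|{\bs x}_i-{\bs x}_0\|\le\diam(B_\nu)=\diam(\nu)$ follows since all support points lie in $\nu\subset B_\nu$. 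Everything else is the routine triangle-inequality estimate already indicated.
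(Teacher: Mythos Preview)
Your approach is essentially the same as the paper's: Taylor expand at a point of $\nu$, eliminate the polynomial part via the vanishing moments \eqref{eq:vanishingMoments}, bound the remainder pointwise, and collect the $\ell^1$-norm of the coefficient vector. Regarding the constant you flag, the paper resolves it by choosing ${\bs x}_0$ as the cluster's midpoint so that $\|{\bs x}_i-{\bs x}_0\|_2\le\diam(\nu)/2$, and then uses the combinatorial estimate $\sum_{|\bs\alpha|=q+1}2^{-(q+1)}/\bs\alpha!\le 1$ to arrive at the constant-free form of \eqref{eq:decay}.
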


\begin{proof}
For ${\bs x}_0\in\nu$, a Taylor expansion of $f$ yields
\[
f({\bs x}) = \sum_{|\bs\alpha|\le q}
\frac{\partial^{|\bs\alpha|}}{\partial{\bs x}^{\bs\alpha}}f({\bs x}_0)
	\frac{({\bs x}-{\bs x}_0)^{\bs\alpha}}{\bs\alpha!}
		+ R_{{\bs x}_0}({\bs x}).
\]
Herein, the remainder $R_{{\bs x}_0}({\bs x})$ reads
\begin{align*}
  R_{{\bs x}_0}({\bs x}) &= (q+1)\sum_{|\bs\alpha|=q+1}
  	\frac{({\bs x}-{\bs x}_0)^{\bs\alpha}}{\bs\alpha!}
	\int_0^1\frac{\partial^{q+1}}{\partial{\bs x}^{\bs\alpha}}
	f\big({\bs x}_0+s({\bs x}-{\bs x}_0)\big)(1-s)^q\d s.
\end{align*}
In view of the vanishing moments, we conclude
\begin{align*}
  |(f,\sigma_{j,k})_\Omega|
  	&= |(R_{{\bs x}_0},\sigma_{j,k})_\Omega|
	\le\sum_{|\bs\alpha|=q+1}
	\frac{\|{\bs x}-{\bs x}_0\|_2^{|\bs\alpha|}}{\bs\alpha!}
	\max_{{\bs x}\in\nu}\bigg|\frac{\partial^{q+1}}
		{\partial{\bs x}^{\bs\alpha}}f(\bs x)\bigg|
		\|{\bs\omega}_{j,k}\|_{1}\\
	&\le\diam(\nu)^{q+1}\|f\|_{C^{q+1}(\Omega)}\|{\bs\omega}_{j,k}\|_{1}.
\end{align*}
Here, we used the estimate
\[
 \sum_{|\bs\alpha|=q+1}\frac{2^{-(q+1)}}{\bs\alpha!}\le 1,
\]
which is obtained by choosing \({\bs x}_0\) as the
cluster's midpoint.
\end{proof}

\section{Discrete samplet transform}\label{sec:FST}
In order to transform between the samplet basis 
and the basis of Dirac measures, we introduce
the \emph{discrete samplet transform} and its inverse.
To this end, we assume that the data 
\(({\bs x}_1,y_1),\ldots,({\bs x}_N,y_N)\)
result from the evaluation of some (unknown) function
\(f\colon\Omega\to\Rbb\),
i.e.\ 
\[y_i=f_i^{\Delta}=(f,\delta_{{\bs x}_i})_\Omega.
\]
Hence, we may represent the function \(f\) on \(X\)
according to
\[f = \sum_{i = 1}^{N} f_i^{\Delta} \delta_{{\bs x}_i}.
\]
Our goal is now to compute the representation
\[f = 
\sum_{i = 1}^{N} f_{k}^{\Sigma} \sigma_{k}
\]
with respect to the samplet basis.
For 
sake of a simpler notation, let
${\bs f}^{\Delta}\isdef [f_i^{\Delta}]_{i=1}^N$ 
and ${\bs f}^{\Sigma}\isdef [f_i^\Sigma]_{i=1}^N$ denote
the associated coefficient vectors.

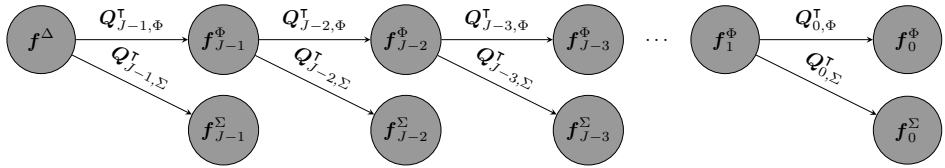
\begin{figure}[htb]
\begin{center}
\scalebox{0.75}{
\begin{tikzpicture}[x=0.4cm,y=0.4cm]
\tikzstyle{every node}=[circle,draw=black,fill=shadecolor,
minimum size=1.2cm]%
\tikzstyle{ptr}=[draw=none,fill=none,above]%
\node at (0,5) (1) {${\bs f}^{\Delta}$};
\node at (8,5) (2) {${\bs f}_{J-1}^{\Phi}$};
\node at (8,1) (3) {${\bs f}_{J-1}^{\Sigma}$};
\node at (16,5) (4) {${\bs f}_{J-2}^{\Phi}$};
\node at (16,1) (5) {${\bs f}_{J-2}^{\Sigma}$};
\node at (24,5) (6) {${\bs f}_{J-3}^{\Phi}$};
\node at (24,1) (7) {${\bs f}_{J-3}^{\Sigma}$};
\node at (30,5) (8) {${\bs f}_{1}^{\Phi}$};
\node at (38,5) (9) {${\bs f}_{0}^{\Phi}$};
\node at (38,1) (10) {${\bs f}_{0}^{\Sigma}$};
\tikzstyle{forward}=[draw,-stealth]%
\tikzstyle{every node}=[style=ptr]
\draw
(1) edge[forward] node[above,sloped]{${\bs Q}_{J-1,\Phi}^\intercal$} (2)
(1) edge[forward] node[above,sloped]{${\bs Q}_{J-1,\Sigma}^\intercal$}%
 (3)
(2) edge[forward] node[above,sloped]{${\bs Q}_{J-2,\Phi}^\intercal$} (4)
(2) edge[forward] node[above,sloped]{${\bs Q}_{J-2,\Sigma}^\intercal$}%
 (5)
(4) edge[forward] node[above,sloped]{${\bs Q}_{J-3,\Phi}^\intercal$} (6)
(4) edge[forward] node[above,sloped]{${\bs Q}_{J-3,\Sigma}^\intercal$}%
 (7)
(8) edge[forward] node[above,sloped]{${\bs Q}_{0,\Phi}^\intercal$} (9)
(8) edge[forward] node[above,sloped]{${\bs Q}_{0,\Sigma}^\intercal$}%
  (10);
\tikzstyle{every node}=[style=ptr]%
\tikzstyle{ptr}=[draw=none,fill=none]%
\node at (27,5) (16) {$\hdots$};
\end{tikzpicture}}
\caption{\label{fig:haar}Visualization of the discrete samplet transform.}
\end{center}
\end{figure}

The discrete samplet transform is based on 
recursively applying the refinement relation 
\eqref{eq:refinementRelation} to the point evaluations
\begin{equation}\label{eq:refinementRelationInnerProducts}
(f, [ \mathbf{\Phi}_{j}^{\nu}, \mathbf{\Sigma}_{j}^{\nu} ])_\Omega
=(f, \mathbf{\Phi}_{j+1}^{\nu} [{\bs Q}_{j,\Phi}^{\nu} ,{\bs Q}_{j,\Sigma}^{\nu} ])_\Omega \\
=(f, \mathbf{\Phi}_{j+1}^{\nu})_\Omega [{\bs Q}_{j,\Phi}^{\nu} , {\bs Q}_{j,\Sigma}^{\nu} ].
\end{equation}
On the finest level, the entries of the vector
$(f, \mathbf{\Phi}_{J}^{\nu})_\Omega$ 
are exactly those of ${\bs f}^{\Delta}$. Recursively 
applying equation \eqref{eq:refinementRelationInnerProducts} therefore
yields all the coefficients $(f, \mathbf{\Sigma}_{j}^{\nu})_\Omega$,
including $(f, \mathbf{\Phi}_{0}^{X})_\Omega$,
required for the representation of $f$ in the samplet basis,
see Figure~\ref{fig:haar} for a visualization of the resulting 
fish bone scheme. The
complete procedure is 
 formulated in Algorithm~\ref{algo:DWT}.

\begin{center}
\scalebox{0.8}{
\begin{algorithm}[H]
\caption{Discrete samplet transform}
\label{algo:DWT}	
\KwData{Data ${\bs f}^\Delta$,
cluster tree $\Tcal$ and transformations
$[{\bs Q}_{j,\Phi}^{\nu},{\bs Q}_{j,\Sigma}^{\nu}]$.}
\KwResult{Coefficients ${\bs f}^{\Sigma}$ 
stored as
$[(f,\mathbf{\Phi}_{0}^{X})_\Omega]^\intercal$ and 
$[(f,\mathbf{\Sigma}_{j}^{\nu})_\Omega]^\intercal$.}	
\Begin{
store $[(f,\mathbf{\Phi}_{0}^{X})_\Omega]^\intercal\isdef$ 
\FuncSty{transformForCluster}($X$)
}
\end{algorithm}}
\end{center}
\begin{center}
\scalebox{0.8}{
\begin{function}[H]
\caption{transformForCluster($\nu$)}
\Begin{
\uIf{$\nu=\{{\bs x}_{i_{1}}, \dots,{\bs x}_{i_{|\nu|}}\}$
	is a leaf of \(\Tcal\)}{
	set ${\bs f}_{j+1}^{\nu}\isdef
	\big[f_{i_{k}}^\Delta\big]_{k=1}^{|\nu|}$
	}
\Else{
\For{all sons $\nu'$ of $\nu$}{
execute $\transformForCluster(\nu')$\\
append the result to ${\bs f}_{j+1}^{\nu}$
}
}
set $[(f,\mathbf{\Sigma}_{j}^{\nu})_\Omega]^\intercal
\isdef({\bs Q}_{j,\Sigma}^{\nu})^\intercal {\bs f}_{j+1}^{\nu}$

\Return{$({\bs Q}_{j,\Phi}^{\nu})^\intercal{\bs f}_{j+1}^{\nu}$}
}
\end{function}}
\end{center}
\begin{remark}
Algorithm \ref{algo:DWT} is based on the transposed version of 
\eqref{eq:refinementRelationInnerProducts} to preserve
the column vector structure of ${\bs f}^\Delta$ and ${\bs f}^{\Sigma}$.
\end{remark}

The inverse transformation is obtained by reversing 
the steps of the discrete samplet transform:
For each cluster, we compute
\[
(f, \mathbf{\Phi}_{j+1}^{\nu})_\Omega
= (f, [ \mathbf{\Phi}_{j}^{\nu}, \mathbf{\Sigma}_{j}^{\nu} ] 
)_\Omega[{\bs Q}_{j,\Phi}^{\nu} ,{\bs Q}_{j,\Sigma}^{\nu} ]^\intercal
\] 
to either obtain the 
coefficients of the
son clusters' scaling functions
or, for leaf clusters, the coefficients ${\bs f}^{\Delta}$. 
The procedure is summarized in Algorithm~\ref{algo:iDWT}.

\begin{center}
\scalebox{0.8}{
\begin{algorithm}[H]
\caption{Inverse samplet transform}
\label{algo:iDWT}
\KwData{Coefficients ${\bs f}^\Sigma$,
cluster tree $\Tcal$ and transformations
$[{\bs Q}_{j,\Phi}^{\nu},{\bs Q}_{j,\Sigma}^{\nu}]$.}
\KwResult{Coefficients ${\bs f}^{\Delta}$
stored as
$[(f,\mathbf{\Phi}_{j}^{\nu})_\Omega]^\intercal$.}
\Begin{
\FuncSty{inverseTransformForCluster}($X$,
$[(f,\mathbf{\Phi}_{0}^{X})_\Omega]^\intercal$)
}
\end{algorithm}}
\end{center}
\begin{center}
\scalebox{0.8}{
\begin{function}[H]
\caption{inverseTransformForCluster($\nu$,
\unexpanded{$[(f,{\bs\Phi}_{j}^\nu)_\Omega]^\intercal$})}
\Begin{
$[(f,{\bs\Phi}_{j+1}^\nu)_\Omega]^\intercal
\isdef [{\bs Q}_{j,\Phi}^{\nu} ,{\bs Q}_{j,\Sigma}^{\nu} ]
\begin{bmatrix}
[(f,{\bs\Phi}_{j}^\nu)_\Omega]^\intercal\\
[(f,{\bs\Sigma}_{j}^\nu)_\Omega]^\intercal
\end{bmatrix}$

\uIf{$\nu=\{{\bs x}_{i_{1}}, \dots,{\bs x}_{i_{|\nu|}}\}$
is a leaf of \(\Tcal\)}{set $\big[f_{i_{k}}^\Delta\big]_{k=1}^{|\nu|}
\isdef[(f,{\bs\Phi}_{j_\nu+1}^\nu)_\Omega]^\intercal$
}
\Else{
\For{all sons $\nu'$ of $\nu$}{
assign the part of $[(f,{\bs\Phi}_{j+1}^\nu)_\Omega]^\intercal$
belonging to \(\nu'\) to $[(f,{\bs\Phi}_{j'}^{\nu'})_\Omega]^\intercal$
		        
execute \FuncSty{inverseTransformForCluster}($\nu'$,
$[(f,{\bs\Phi}_{j'}^{\nu'})_\Omega]^\intercal$)			}
}
}
\end{function}}
\end{center}

The discrete samplet transform and its inverse 
can be performed in linear cost. This 
result is well known in case of wavelets and was
crucial for their rapid development.

\begin{theorem}
The runtime of the discrete samplet transform and the inverse
samplet transform are \(\mathcal{O}(N)\), each.
\end{theorem}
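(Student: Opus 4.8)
The plan is to bound the total work of Algorithm~\ref{algo:DWT} by charging the cost to the clusters of $\Tcal$ and summing. For a fixed cluster $\nu$ at level $j=j_\nu$, the recursive routine \FuncSty{transformForCluster} does, apart from the recursive calls, only the following: it assembles the vector ${\bs f}_{j+1}^\nu$ by concatenating the return values of its (at most two) sons, and it applies the two matrices $({\bs Q}_{j,\Phi}^\nu)^\intercal$ and $({\bs Q}_{j,\Sigma}^\nu)^\intercal$ to ${\bs f}_{j+1}^\nu$. By Remark~\ref{remark:introCQ}, the matrix $\big[{\bs Q}_{j,\Phi}^\nu,{\bs Q}_{j,\Sigma}^\nu\big]$ is square of size $n_\nu \times n_\nu$ with $n_\nu = |\mathbf\Phi_{j+1}^\nu| \le 2m_q$ for a non-leaf cluster and $n_\nu \le$ (leaf size) for a leaf; in either case $n_\nu$ is bounded by a constant depending only on $q$, $d$ and the leaf-size parameter. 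Hence the matrix-vector products at $\nu$ cost $\mathcal{O}(n_\nu^2)=\mathcal{O}(1)$ and the concatenation costs $\mathcal{O}(n_\nu)=\mathcal{O}(1)$, so each cluster contributes $\mathcal{O}(1)$ work beyond its recursive calls.

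It remains to count clusters. First I would note that the number of leaf clusters is $\mathcal{O}(N)$: each leaf holds at least one point, leaves are pairwise disjoint, and they cover $X$, so there are at most $N$ of them. Since $\Tcal$ is a binary tree in which every non-leaf has exactly two sons, the number of internal nodes is one less than the number of leaves, hence also $\mathcal{O}(N)$, and $|P| = \mathcal{O}(N)$. Summing the per-cluster $\mathcal{O}(1)$ bound over all $\mathcal{O}(N)$ clusters gives total runtime $\mathcal{O}(N)$ for the forward transform. (One may note this does not even require the tree to be balanced; balancedness is only needed elsewhere for the $\mathcal{O}(N)$ bound on \emph{constructing} the basis, cf.\ Remark~\ref{remark:waveletLeafSize}.)

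For the inverse transform the argument is symmetric: Algorithm~\ref{algo:iDWT} visits each cluster once, and at $\nu$ it applies the single square matrix $\big[{\bs Q}_{j,\Phi}^\nu,{\bs Q}_{j,\Sigma}^\nu\big]$ of size $n_\nu\times n_\nu=\mathcal{O}(1)$ to the stacked coefficient vector, then splits the result among the sons — again $\mathcal{O}(1)$ per cluster — so the total is once more $\mathcal{O}(N)$. I do not foresee a genuine obstacle here; the one point that needs a clean statement rather than hand-waving is the uniform bound $n_\nu \le \max\{2m_q,\,\text{leaf size}\}$ on the local transform size, which is exactly the content of Remark~\ref{remark:introCQ} and is what converts the per-cluster cost into a constant. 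Everything else is the standard ``linear work on a tree with $\mathcal{O}(N)$ nodes and $\mathcal{O}(1)$ work per node'' bookkeeping.
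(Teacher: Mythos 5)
Your argument is correct and is essentially the standard Tausch--White accounting that the paper's proof simply delegates to \cite{TW03}: $\mathcal{O}(1)$ work per cluster (via the uniform bound $n_\nu\le\max\{2m_q,\text{leaf size}\}$ from Remark~\ref{remark:introCQ} together with a bounded leaf size) summed over the $\mathcal{O}(N)$ nodes of a binary cluster tree, and symmetrically for the inverse transform. Your side remark that balancedness is not needed here is also accurate, provided every non-leaf has two sons and the leaf cardinality is bounded, since otherwise the $|\nu|\times|\nu|$ leaf transforms could dominate.
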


\begin{proof}
As the samplet construction follows the construction
of Tausch and White, we refer to \cite{TW03} for the
details of the proof.
\end{proof}

\section{Numerical results I}\label{sec:Num1}
To demonstrate the efficacy of the samplet analysis,
we compress different sample data in one, two and three
spatial dimensions. For each example, we use samplets 
with \(q+1=3\) vanishing moments.

\subsection*{One dimension}
We start with two one-dimensional
examples. On the one hand, we consider the function
\[
f(x)=\frac 3 2 e^{-40|x-\frac 1 4|}
+ 2e^{-40|x|}-e^{-40|x+\frac 1 2|},
\]
sampled at $8192$ uniformly distributed points on \([-1,1]\).
On the other hand, we consider a path of a Brownian motion 
sampled at the same points. The coefficients of the samplet 
transformed data are thresholded with \(10^{-i}\|{\bs f}^{\Sigma}\|_\infty\), 
\(i=1,2,3\), respectively.
The resulting compression ratios and the reconstructions 
can be found in Figure~\ref{fig:Expcomp} and Figure~\ref{fig:BMcomp}, 
respectively. One readily infers that in both cases high compression 
rates are achieved at high accuracy. In case of the Brownian motion,
the smoothing of the sample data can be realized by increasing the
compression rate, corresponding to throwing away more and 
more detail information. Indeed, due to the orthonormality of the samplet
basis, this procedure amounts to a least squares fit of the data.

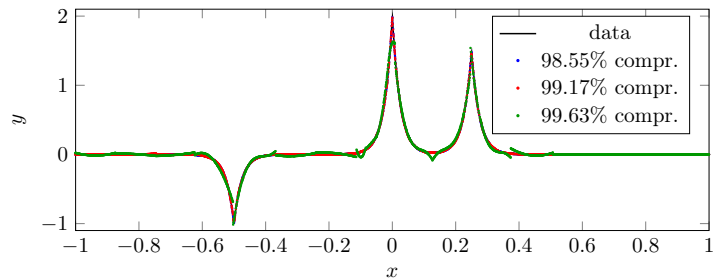
\begin{figure}[htb]
\begin{center}
\scalebox{0.8}{
\begin{tikzpicture}
\begin{axis}[width=0.8\textwidth, height=0.35\textwidth, xmin = -1, xmax=1,
 ymin=-1.1,
ymax=2.1, ylabel={$y$}, xlabel ={$x$},legend style={mark
 options={scale=2}},
legend pos = north east]
\addplot[line width=0.7pt,color=black]
table[each nth point=3,x index={0},y index = {1}]{%
./Results/ExpCompress1D.txt};
\addlegendentry{data};
\addplot[line width=0.7pt,color=blue, only marks, mark size=0.2pt]
table[each nth point=3,x index={0},y index = {5}]{%
./Results/ExpCompress1D.txt};
\addlegendentry{$98.55\%$ compr.};
\addplot[line width=0.7pt,color=red, only marks, mark size=0.2pt]
table[each nth point=3,x index={0},y index = {4}]{%
./Results/ExpCompress1D.txt};
\addlegendentry{$99.17\%$ compr.};
\addplot[line width=0.7pt,color=darkgreen, only marks, mark size=0.2pt]
table[each nth point=3,x index={0},y index = {3}]{%
./Results/ExpCompress1D.txt};
\addlegendentry{$99.63\%$ compr.};
\end{axis}
\end{tikzpicture}}
\end{center}
\caption{\label{fig:Expcomp}Sampled function approximated with
different compression ratios.}
\end{figure}

\begin{figure}[htb]
\begin{center}
\scalebox{0.8}{
\begin{tikzpicture}[ausschnitt/.style={black!80}]
\begin{axis}[width=0.8\textwidth, height=0.35\textwidth, xmin = -1, xmax=1,
 ymin=-1,
ymax=2.4,
ylabel={$y$}, xlabel ={$x$},legend style={mark options={scale=2}},
legend pos = south east]
    \draw[ausschnitt]
      (axis cs:-0.5,-0.5)coordinate(ul)--
      (axis cs:0.005,-0.5)coordinate(ur)--
      (axis cs:0.005,0.4)coordinate(or)--
      (axis cs:-0.5,0.4) -- cycle;
\addplot[line width=0.7pt,color=black]
table[each nth point=5,x index={0},y index = {1}]{%
./Results/BMCompress1D.txt};
\addlegendentry{data};
\addplot[line width=0.7pt,color=blue, only marks, mark size=0.2pt]
table[each nth point=6,x index={0},y index = {5}]{%
./Results/BMCompress1D.txt};
\addlegendentry{$92.69\%$ compr.};
\addplot[line width=0.7pt,color=red, only marks, mark size=0.2pt]
table[each nth point=6,x index={0},y index = {4}]{%
./Results/BMCompress1D.txt};
\addlegendentry{$99.24\%$ compr.};
\addplot[line width=0.7pt,color=darkgreen, only marks, mark size=0.2pt]
table[each nth point=6,x index={0},y index = {3}]{%
./Results/BMCompress1D.txt};
\addlegendentry{$99.88\%$ compr.};
\end{axis}
\begin{axis}[yshift=-.32\textwidth,xshift=0.12\textwidth,
width=0.4\textwidth, height=0.35\textwidth, xmin = -0.5,
xmax=0.005, ymin=-0.5,
ymax=0.4,axis line style=ausschnitt]
\addplot[line width=0.7pt,color=black]
table[each nth point=2,x index={0},y index = {1}]{%
./Results/BMCompress1D.txt};
\addplot[line width=0.7pt,color=blue, only marks, mark size=0.2pt]
table[each nth point=2,x index={0},y index = {5}]{%
./Results/BMCompress1D.txt};
\addplot[line width=0.7pt,color=red, only marks, mark size=0.2pt]
table[each nth point=2,x index={0},y index = {4}]{%
./Results/BMCompress1D.txt};
\addplot[line width=0.7pt,color=darkgreen, only marks, mark size=0.2pt]
table[each nth point=2,x index={0},y index = {3}]{%
./Results/BMCompress1D.txt};
\end{axis}
    \draw[ausschnitt]
      (current axis.north west)--(ul)
      (current axis.north east)--(ur);
\end{tikzpicture}}
\caption{\label{fig:BMcomp}Sampled Brownian motion approximated with
different compression ratios.}
\end{center}
\end{figure}
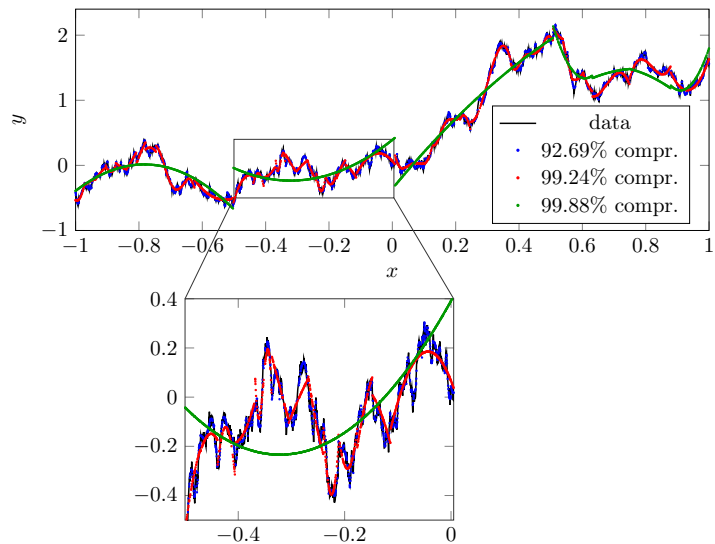

\subsection*{Two dimensions}
As a second application for samplets, we consider image compression.
To this end, we use a \(2000\times 2000\) pixel grayscale landscape 
image. The coefficients of the samplet transformed image are thresholded
with \(10^{-i}\|{\bs f}^{\Sigma}\|_\infty\), \(i=2,3,4\), respectively.
The corresponding
results and compression rates can be found in Figure~\ref{fig:compImage}.
A visualization of the samplet coefficients in case of the respective
low compression can be found in Figure~\ref{fig:coeffImage}.

\begin{figure}[htb]
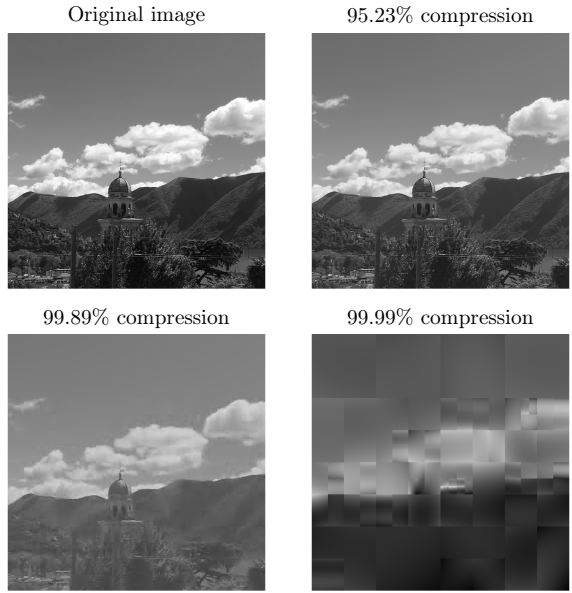

\begin{center}
\scalebox{0.8}{
\begin{tikzpicture}
\draw(0,0)node{\includegraphics[scale = 0.12,trim=65 47 65 24,clip]{%
./Results/OriginalLugano.png}};
\draw(0,2.4)node{Original image};
\draw(5,2.4)node{\(95.23\%\) compression};
\draw(5,0)node{\includegraphics[scale = 0.12,trim=65 47 65 24,clip]{%
./Results/CompressedLowLugano.png}};
\draw(0,-2.6)node{\(99.89\%\) compression};
\draw(0,-5)node{\includegraphics[scale = 0.12,trim=65 47 65 24,clip]{%
./Results/CompressedIntermedLugano.png}};
\draw(5,-2.6)node{\(99.99\%\) compression};
\draw(5,-5)node{\includegraphics[scale = 0.12,trim=65 47 65 24,clip]{%
./Results/CompressedHighLugano.png}};
\end{tikzpicture}}
\caption{\label{fig:compImage}Different compression rates of the
test image.}
\end{center}
\end{figure}

\begin{figure}[htb]
\begin{center}
\begin{tikzpicture}
\draw(0,0)node{\includegraphics[scale = 0.1,trim=1000 195 1000 195,clip]{%
./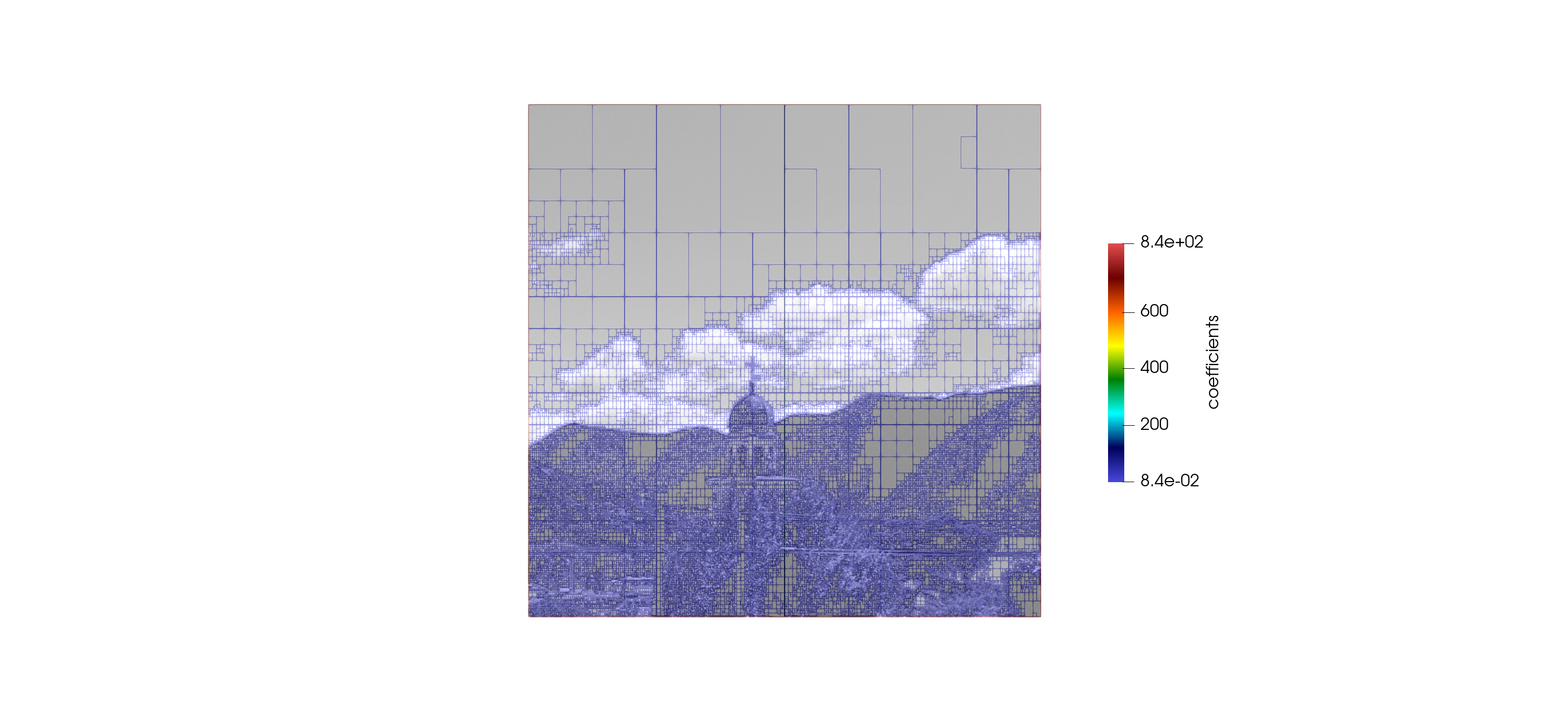}};
\draw(3,0)node{\includegraphics[scale = 0.16,trim=2100 400 460 400,clip]{%
./Results/LuganoCoeffs.png}};
\end{tikzpicture}
\caption{\label{fig:coeffImage}Visualization of the samplet coefficients
for the test image.}
\end{center}
\end{figure}

\subsection*{Three dimensions}
Finally, we show a result in three dimensions.
Here, the points are given by a uniform subsample of
a triangulation of the Stanford bunny. We consider data on the
Stanford bunny generated by the function
\[
f({\bs x})=e^{-20\|{\bs x}-{\bs p}_0\|_2}
+e^{-20\|{\bs x}-{\bs p}_1\|_2},
\]
where the points \({\bs p}_0\) and \({\bs p}_1\) are located at the tips
of the bunny's ears. Moreover, the geometry has been rescaled to a 
diameter of 2. The plot on the left-hand side of
Figure~\ref{fig:coeffStanford} 
visualizes the sample data, while the plot on the right-hand side 
shows the dominant coefficients in case of a threshold parameter 
of \(10^{-2}\|{\bs f}^{\Sigma}\|_\infty\).

\begin{figure}[htb]
\begin{center}
\begin{tikzpicture}
\draw(0,0)node{\includegraphics[%
scale = 0.1,trim=1040 230 1050 280,clip]{%
./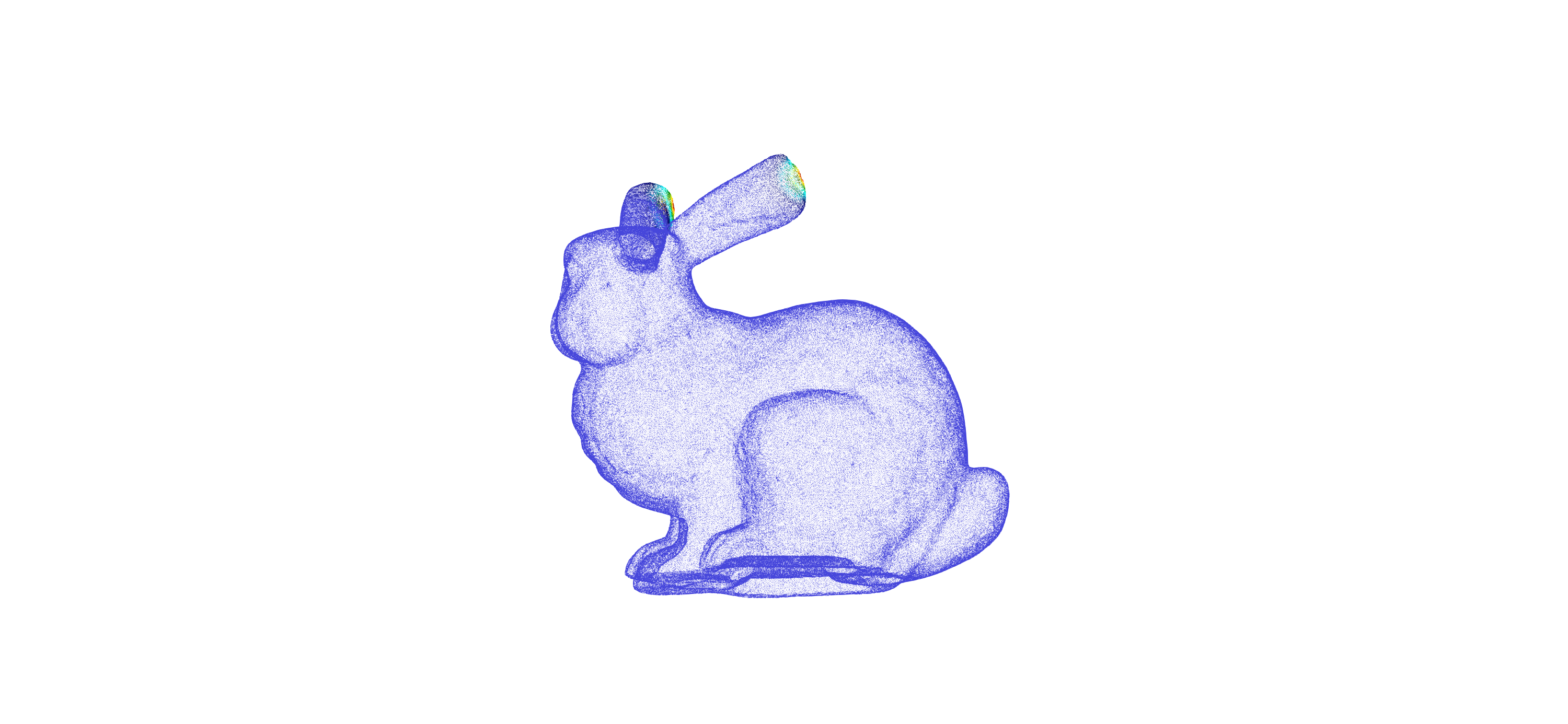}};
\draw(4,0)node{\includegraphics[%
scale = 0.1,trim=1000 200 1000 200,clip]{%
./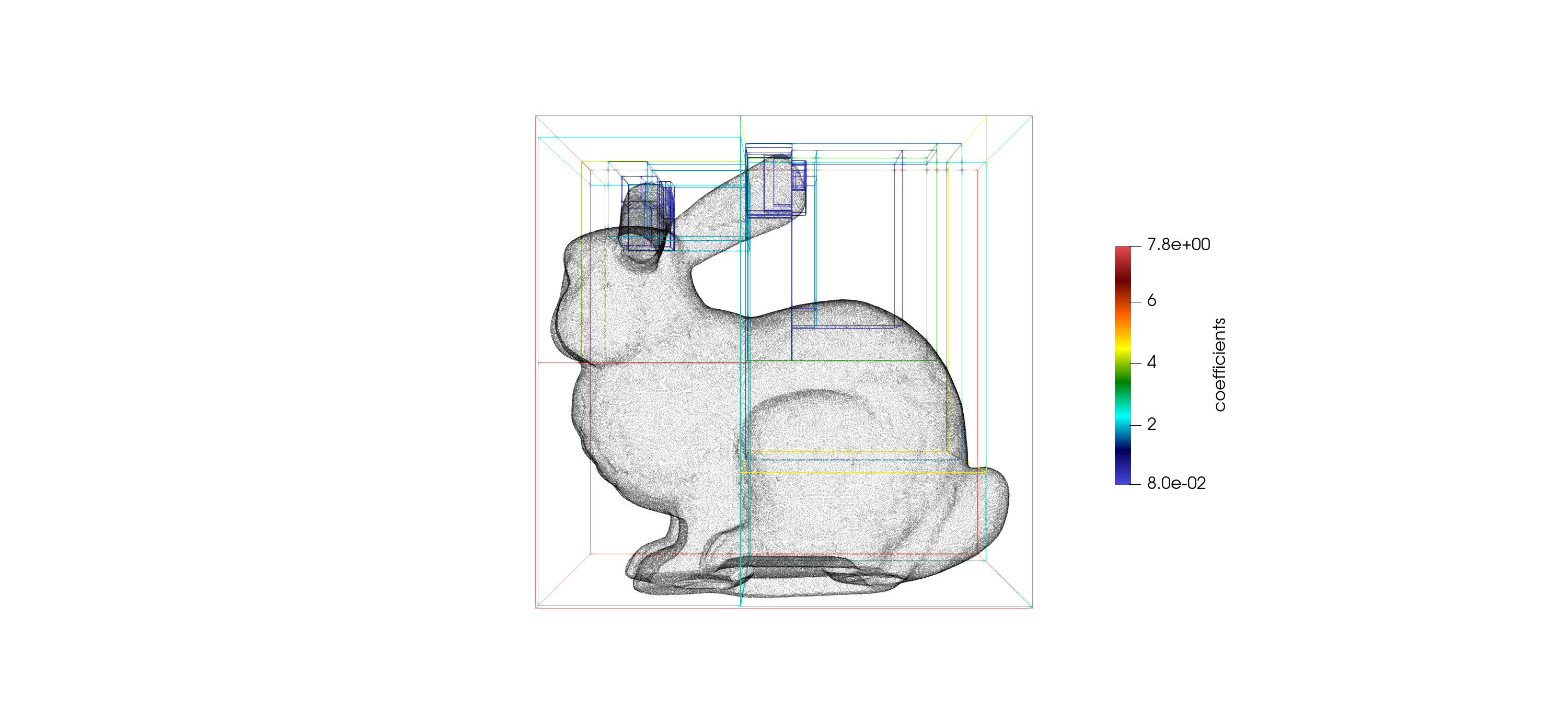}};
\draw(7,0)node{\includegraphics[%
scale = 0.16,trim=2130 400 600 400,clip]{%
./Results/StanfordBunny1e-2Coeff.png}};
\end{tikzpicture}
\caption{\label{fig:coeffStanford}Data on the Stanford bunny (left) and
dominant samplet coefficients (right).}
\end{center}
\end{figure}

\section{Compression of kernel matrices}\label{sec:kernelCompression}
\subsection{Kernel matrices}
The second application of samplets we consider
is the compression of matrices arising from positive
(semi-) definite kernels, as they emerge in kernel 
methods, such as scattered data analysis, kernel 
based learning or Gaussian process regression,
see for example \cite{HSS08,Schaback2006,
Wendland2004,Williams1998} and the references 
therein.

We start by recalling the concept of a positive kernel.

\begin{definition}\label{def:poskernel}
A symmetric kernel
$\kernel\colon\Omega\times\Omega\rightarrow\Rbb$ is 
called \textit{positive (semi-)definite} on $\Omega\subset\R^d$,
iff \([\kernel({\bs x}_i,{\bs x}_j)]_{i,j=1}^N\)
is a symmetric and positive (semi-)definite matrix
for all
$\{{\bs x}_1, \ldots,{\bs x}_N\}\subset\Omega$
and all $N\in\mathbb{N}$.
\end{definition}

As a particular class of positive definite
kernels, we consider the \emph{Mat\'ern kernels} given by
\begin{equation}\label{eq:matkern}
k_\nu(r)\isdef\frac{2^{1-\nu}}{\Gamma(\nu)}\bigg(\frac {\sqrt{2\nu}r}
{\ell}\bigg)^\nu
K_\nu\bigg(\frac {\sqrt{2\nu}r}{\ell}\bigg),\quad r\geq 0,\ \ell >0 .
\end{equation}
Herein, $K_{\nu}$ is the modified Bessel function of the second 
kind of order $\nu$ and $\Gamma$ is the gamma function.
The parameter $\nu$ steers for the smoothness of the 
kernel function. Especially,
the analytic squared-exponential kernel is
retrieved for $\nu\to\infty$. Especially, we have
\begin{equation}
\begin{aligned}
k_{1/2}(r)=\exp\bigg(-\frac{r}{\ell}\bigg),
\quad k_{\infty}(r)=\exp\bigg(-\frac{r^2}{2\ell^2}\bigg).
\end{aligned}
\end{equation}
A positive definite kernel in the sense of
 Definition~\ref{def:poskernel}
is obtained by considering
\[
\kernel({\bs x},{\bs x}^\prime)\isdef
k_\nu(\|{\bs x}-{\bs x}^\prime\|_2).
\]

Given the set of points \(X=\{{\bs x}_1,\ldots,{\bs x}_N\}\), many
applications require the assembly and the inversion of the
\emph{kernel matrix}
\[
{\bs K}\isdef[\kernel({\bs x}_i,{\bs x}_j)]_{i,j=1}^N\in\Rbb^{N\times N}
\]
or an appropriately regularized version
\[
{\bs K}+\rho{\bs I},\quad \rho>0,
\]
thereof. In case that 
\(N\) is a large number, already the assembly and storage of
\({\bs K}\) 
can easily become prohibitive. For the solution of an associated 
linear system, the situation is even worse. 
Fortunately, the kernel matrix can be compressed 
by employing samplets. To this end, the evaluation of 
the kernel function at the points ${\bs x}_i$ and ${\bs x}_j$ 
will be denoted by 
\[
(\kernel,\delta_{{\bs x}_i}\otimes\delta_{{\bs x}_j}
)_{\Omega\times\Omega}\isdef\kernel({\bs x}_i,{\bs x}_j).
\] 
Hence, in view of $V = \{\delta_{{\bs x}_1},\ldots,\delta_{{\bs x}_N}\}$, 
we may write the kernel matrix as
\[
  {\bs K} = \big[(\kernel,\delta_{{\bs x}_i}
\otimes\delta_{{\bs x}_j})_{\Omega\times\Omega}\big]_{i,j=1}^N.
\]

\subsection{Asymptotically smooth kernels}
The essential ingredient for the samplet compression of 
kernel matrices is the \emph{asymptotical smoothness}
property of the kernel
\begin{equation}\label{eq:kernel_estimate}
  \frac{\partial^{|\bs\alpha|+|\bs\beta|}}
  	{\partial{\bs x}^{\bs\alpha}
  	\partial{\bs y}^{\bs\beta}} \kernel({\bs x},{\bs y})
  		\le c_\kernel \frac{(|\bs\alpha|+|\bs\beta|)!}
		{r^{|\bs\alpha|+|\bs\beta|}
		\|{\bs x}-{\bs y}\|_2^{|\bs\alpha|+|\bs\beta|}},\quad 
		c_\kernel,r>0,
\end{equation}
which is for example satisfied by the Mat\'ern kernels.
Using this estimate, we obtain the following result, 
which is the basis for the matrix compression introduced 
thereafter.

\begin{lemma}\label{lem:kernel_decay}
Consider two samplets $\sigma_{j,k}$ and $\sigma_{j',k'}$,
exhibiting $q+1$ vanishing moments with supporting
clusters \(\nu\) and \(\nu'\), respectively.
Assume that $\dist(\nu,\nu') > 0$. Then, for kernels 
satisfying \eqref{eq:kernel_estimate}, it holds that
\begin{equation}\label{eq:kernel_decay}
  (\kernel,\sigma_{j,k}\otimes\sigma_{j',k'})_{\Omega\times\Omega}\le
  	c_\kernel \frac{\diam(\nu)^{q+1}\diam(\nu')^{q+1}}
		{(dr\dist(\nu_{j,k},\nu_{j',k'}))^{2(q+1)}}
		\|{\bs\omega}_{j,k}\|_{1}\|{\bs\omega}_{j',k'}\|_{1}.
\end{equation}
\end{lemma}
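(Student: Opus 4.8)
The plan is to mimic the proof of Lemma~\ref{lemma:decay}, but now applied in both variables simultaneously and exploiting that the two supporting clusters are separated by a positive distance. First I would fix expansion points ${\bs x}_0\in\nu$ and ${\bs y}_0\in\nu'$, chosen as the respective cluster midpoints. Writing $\kernel({\bs x},{\bs y})$ via a Taylor expansion of total degree $q$ in ${\bs x}$ about ${\bs x}_0$ and, independently, of total degree $q$ in ${\bs y}$ about ${\bs y}_0$, the tensor-product structure yields a sum of terms that are polynomials in ${\bs x}$ of degree $\le q$ times polynomials in ${\bs y}$ of degree $\le q$, plus remainders. Pairing with $\sigma_{j,k}\otimes\sigma_{j',k'}$, every term in which the ${\bs x}$-factor is a polynomial of degree $\le q$ is annihilated by the vanishing moments of $\sigma_{j,k}$ (cf.\ \eqref{eq:vanishingMoments}), and likewise every term whose ${\bs y}$-factor is a polynomial of degree $\le q$ is annihilated by $\sigma_{j',k'}$. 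Hence only the genuinely bivariate remainder term survives, namely the one obtained by taking the degree-$(q+1)$ Taylor remainder in \emph{both} variables.

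The second step is to bound that surviving remainder. It has the integral form of a double Taylor remainder, so after collecting the multi-index sums it is controlled by the $(q+1)$-st order mixed derivatives $\partial^{\bs\alpha}_{\bs x}\partial^{\bs\beta}_{\bs y}\kernel$ with $|\bs\alpha|=|\bs\beta|=q+1$, evaluated at intermediate points lying in $\nu$ and $\nu'$. Because $\dist(\nu,\nu')>0$, for any such intermediate points $\|{\bs x}-{\bs y}\|_2\ge\dist(\nu,\nu')$, so the asymptotic smoothness estimate \eqref{eq:kernel_estimate} applies with $|\bs\alpha|+|\bs\beta|=2(q+1)$ and gives a factor $c_\kernel\,(2(q+1))!/\bigl(r^{2(q+1)}\dist(\nu,\nu')^{2(q+1)}\bigr)$. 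The monomial factors $({\bs x}-{\bs x}_0)^{\bs\alpha}$ and $({\bs y}-{\bs y}_0)^{\bs\beta}$ are bounded by $\diam(\nu)^{q+1}$ and $\diam(\nu')^{q+1}$ respectively, and pairing a signed measure with a fixed function costs at most a factor $\|{\bs\omega}_{j,k}\|_1$ in each variable, exactly as in the proof of Lemma~\ref{lemma:decay}. Finally, the combinatorial sums over the multi-indices $\bs\alpha,\bs\beta$ with $1/\bs\alpha!$ weights, together with the remainder prefactors, are absorbed; using the midpoint choice of ${\bs x}_0,{\bs y}_0$ one gets the dimensional constant $d^{2(q+1)}$ in the denominator, as in the one-variable estimate $\sum_{|\bs\alpha|=q+1}2^{-(q+1)}/\bs\alpha!\le 1$ used there.

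The main obstacle I expect is purely bookkeeping: keeping the two independent multi-index expansions disentangled so that the ``mixed'' survivor is correctly identified, and checking that the factorial and $d$-power constants coming out of $(2(q+1))!$, the double remainder prefactor $(q+1)^2$, and the two Taylor sums really collapse into the clean bound $c_\kernel\,\diam(\nu)^{q+1}\diam(\nu')^{q+1}\bigl(d\,r\,\dist(\nu,\nu')\bigr)^{-2(q+1)}\|{\bs\omega}_{j,k}\|_1\|{\bs\omega}_{j',k'}\|_1$ stated in \eqref{eq:kernel_decay}; note the statement writes $\dist(\nu_{j,k},\nu_{j',k'})$ for $\dist(\nu,\nu')$. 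No new idea is needed beyond the one-variable Lemma~\ref{lemma:decay}; the argument is its tensorized version.
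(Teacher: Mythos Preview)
Your proposal is correct and follows essentially the same approach as the paper: a sequential Taylor expansion in both variables, elimination of the polynomial parts via the vanishing moments of each samplet, and estimation of the surviving double remainder using \eqref{eq:kernel_estimate} together with the cluster separation and the $\ell^1$-bounds on the coefficient vectors. The paper carries out the combinatorial bookkeeping you flag by writing $(|\bs\alpha|+|\bs\beta|)!/(\bs\alpha!\bs\beta!)$ via multinomial coefficients to obtain the factor $\binom{2(q+1)}{q+1}d^{2(q+1)}\le 2^{2(q+1)}d^{2(q+1)}$, and then absorbs the $2^{2(q+1)}$ using the midpoint bound $\|\cdot-{\bs x}_0\|_2^{q+1}\le(\diam(\nu)/2)^{q+1}$ in each variable.
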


\begin{proof}
Let ${\bs x}_0\in\nu$ and ${\bs y}_0\in\nu'$.
A Taylor expansion of the kernel with respect to 
${\bs x}$ yields
\[
\kernel({\bs x},{\bs y}) = \sum_{|\bs\alpha|\le q}
\frac{\partial^{|\bs\alpha|}}
  	{\partial{\bs x}^{\bs\alpha}\kernel({\bs x}_0,{\bs y})}
		\frac{({\bs x}-{\bs x}_0)^{\bs\alpha}}{\bs\alpha!}
			+ R_{{\bs x}_0}({\bs x},{\bs y}),
\]
where the remainder $R_{{\bs x}_0}({\bs x},{\bs y})$ is given by
\begin{align*}
  R_{{\bs x}_0}({\bs x},{\bs y}) &= (q+1)\sum_{|\bs\alpha|=q+1}
  	\frac{({\bs x}-{\bs x}_0)^{\bs\alpha}}{\bs\alpha!}
	\int_0^1\frac{\partial^{q+1}}{\partial{\bs x}^{\bs\alpha}}
	\kernel\big({\bs x}_0+s({\bs x}-{\bs x}_0),{\bs y}\big)(1-s)^q\d s.
\end{align*}
Next, we expand the remainder $R_{{\bs x}_0}({\bs x},{\bs y})$ with 
respect to ${\bs y}$ and derive
\begin{align*}
  R_{{\bs x}_0}({\bs x},{\bs y}) &= (q+1)\sum_{|\bs\alpha|=q+1}
  	\frac{({\bs x}-{\bs x}_0)^{\bs\alpha}}{\bs\alpha!}
	\sum_{|\bs\beta|\le q
	}\frac{({\bs y}-{\bs y}_0)^{\bs\beta}}{\bs\beta!}\\
	&\times\int_0^1\frac{\partial^{q+1}}{\partial{\bs x}^{\bs\alpha}}
	\frac{\partial^{|\bs\beta|}}{\partial{\bs y}^{\bs\beta}}
	\kernel\big({\bs x}_0+s({\bs x}-{\bs x}_0),{\bs y}_0\big)(1-s)^q\d s
		+ R_{{\bs x}_0,{\bs y}_0}({\bs x},{\bs y}).
\end{align*}
Here, the remainder $R_{{\bs x}_0,{\bs y}_0}({\bs x},{\bs y})$ 
is given by
\begin{align*}
  &R_{{\bs x}_0,{\bs y}_0}({\bs x},{\bs y}) = (q+1)^2 
  	\sum_{|\bs\alpha|,|\bs\beta|=q+1}
  	\frac{({\bs x}-{\bs x}_0)^{\bs\alpha}}{\bs\alpha!}
	\frac{({\bs y}-{\bs y}_0)^{\bs\beta}}{\bs\beta!}\\
	&\qquad\times\int_0^1\int_0^1\frac{\partial^{2(q+1)}}
  	{\partial{\bs x}^{\bs\alpha}\partial{\bs y}^{\bs\beta}}
	\kernel\big({\bs x}_0+s({\bs x}-{\bs x}_0),{\bs y}_0
	+t({\bs y}-{\bs y}_0)\big)(1-s)^q(1-t)^q\d t\d s.
\end{align*}
We thus arrive at the decomposition
\[
\kernel({\bs x},{\bs y}) = p_{{\bs y}}({\bs x}) + p_{{\bs x}}({\bs y}) 
	+ R_{{\bs x}_0,{\bs y}_0}({\bs x},{\bs y}),
\]
where $p_{{\bs y}}({\bs x})$ is a polynomial of degree $q$ in ${\bs x}$,
with coefficients depending on ${\bs y}$, while $p_{{\bs x}}({\bs y})$
is a polynomial of degree $q$ in ${\bs y}$, with coefficients depending 
on ${\bs x}$. Due to the vanishing moments, we obtain
\[
  (\kernel,\sigma_{j,k}\otimes\sigma_{j',k'})_{\Omega\times\Omega}
  	=(R_{{\bs x}_0,{\bs y}_0},
		\sigma_{j,k}\otimes\sigma_{j',k'})_{\Omega\times\Omega}.
\] 
In view of \eqref{eq:kernel_estimate}, we thus find
\begin{align*}
&|(\kernel,\sigma_{j,k}\otimes\sigma_{j',k'})_{\Omega\times\Omega}|
= |(R_{{\bs x}_0,{\bs y}_0},
		\sigma_{j,k}\otimes\sigma_{j',k'})_{\Omega\times\Omega}|\\
&\qquad\le c_\kernel \Bigg(\sum_{|\bs\alpha|,|\bs\beta|=q+1}
	\frac{(|\bs\alpha|+|\bs\beta|)!}{\bs\alpha!\bs\beta!}\Bigg)
	\frac{(\|\cdot-{\bs x}_0\|^{q+1}_2,|\sigma_{j,k}|)_\Omega
	(\|\cdot-{\bs y}_0\|^{q+1}_2,|\sigma_{j',k'}|)_\Omega}{
	r^{2(q+1)}\dist(\nu,\nu')^{2(q+1)}}.
\end{align*}
Next, we have by means of multinomial coefficients that
\[
(|\bs\alpha|+|\bs\beta|)!
={|\bs\alpha|+|\bs\beta|\choose |\bs\beta|}
{|\bs\alpha|\choose\bs\alpha}
{|\bs\beta|\choose\bs\beta}
\bs\alpha!\bs\beta!,
\]
which in turn implies that
\begin{align*}
  \sum_{|\bs\alpha|,|\bs\beta|=q+1}
  \frac{(|\bs\alpha|+|\bs\beta|)!}{\bs\alpha!\bs\beta!}
  &= {2(q+1)\choose q+1} \sum_{|\bs\alpha|,|\bs\beta|=q+1}
{|\bs\alpha|\choose\bs\alpha}
{|\bs\beta|\choose\bs\beta}\\
&= {2(q+1)\choose q+1} d^{2(q+1)}
\le d^{2(q+1)} 2^{2(q+1)}.
\end{align*}
Moreover, we use
\[
(\|\cdot-{\bs x}_0\|_2^{q+1},|\sigma_{j,k}|)_\Omega
\le\bigg(\frac{\diam(\nu)}{2}\bigg)^{q+1}\|{\bs\omega}_{j,k}\|_{1},
\]
and likewise
\[
(\|\cdot-{\bs y}_0\|_2^{q+1},|\sigma_{j',k'}|)_\Omega
\le\bigg(\frac{\diam(\nu')}{2}\bigg)^{q+1}\|{\bs\omega}_{j',k'}\|_{1}.
\]
Combining all the estimates, we arrive at the desired 
result \eqref{eq:kernel_decay}.
\end{proof}

\subsection{Matrix compression}
Lemma~\ref{lem:kernel_decay} immediately suggests 
a compression strategy for kernel matrices in
samplet representation. We mention that this compression 
differs from the wavelet matrix compression introduced
in \cite{DHS}, since we do not exploit the decay of the
samplet coefficients with respect to the level in case of 
smooth data. This enables us to also consider a non-uniform 
distribution of the points in $V$. Consequently, we use 
on all levels the same accuracy, what is more similar 
to the setting in \cite{BCR}.

\begin{theorem}\label{thm:compression}
Set all coefficients of the kernel matrix
\[
{\bs K}^\Sigma\isdef\big[(\kernel,\sigma_{j,k}\otimes\sigma_{j',k'})_{\Omega\times\Omega}
\big]_{j,j',k,k'}
\]
to zero which satisfy
\begin{equation}\label{eq:cutoff}
   \dist(\nu,\nu')\ge\eta\max\{\diam(\nu),\diam(\nu')\},\quad\eta>0,
\end{equation}
where \(\nu\) is the cluster supporting \(\sigma_{j,k}\) and
\(\nu'\) is the cluster supporting \(\sigma_{j',k'}\), respectively.
Then, it holds
\[
  \big\|{\bs K}^\Sigma-{\bs K}^\Sigma_\varepsilon\big\|_F
	\le c_\kernel c_{\operatorname{sum}} {(\eta dr)^{-2(q+1)}} 
		m_q N\sqrt{\log(N)}.
\]
for some constant \(c_{\operatorname{sum}}>0\),
where \(m_q\) is given by \eqref{eq:mq}.
\end{theorem}

\begin{proof}
We first fix the levels $j$ and $j'$. In view 
\eqref{eq:kernel_decay}, we can estimate any coefficient 
which satisfies \eqref{eq:cutoff} by
\begin{align*}
&|(\kernel,\sigma_{j,k}\otimes\sigma_{j',k'})_{\Omega\times\Omega}|\\
&\qquad\le
  	c_\kernel \bigg(\frac{\min\{\diam(\nu),\diam(\nu')\}}
		{\max\{\diam(\nu),\diam(\nu')\}}\bigg)^{q+1}
			{(\eta dr)^{-2(q+1)}}\|{\bs\omega}_{j,k}\|_{1}\|
			{\bs\omega}_{j',k'}\|_{1}.
\end{align*}
If we next set
\[
\theta_{j,j'}\isdef \max_{\nu\in\Tcal_j,\nu'\in\Tcal_{j'}}
\bigg\{\frac{\min\{\diam(\nu),\diam(\nu')\}}
{\max\{\diam(\nu),\diam(\nu')\}}\bigg\},
\]
then we obtain 
\[
|(\kernel,\sigma_{j,k}\otimes\sigma_{j',k'})_{\Omega\times\Omega}|
\le c_\kernel\theta_{j,j'}^{q+1}{(\eta dr)^{-2(q+1)}}
	\|{\bs\omega}_{j,k}\|_{1}\|{\bs\omega}_{j',k'}\|_{1}
\]
for all coefficients such that \eqref{eq:cutoff} holds. 
In view of \eqref{eq:ell1-norm} and the fact that there are 
at most $m_q$ samplets
per cluster, we arrive at
\[
  \sum_{k,k'} \|{\bs\omega}_{j,k}\|_{1}^2\|{\bs\omega}_{j',k'}\|_{1}^2
  	\leq\sum_{k,k'}|\nu|\cdot|\nu'| = m_q^2 N^2.
\]
Thus, for a fixed level-level block, we arrive at the estimate
\begin{align*}
 \big\|{\bs K}^\Sigma_{j,j'}-{\bs K}^\Sigma_{\varepsilon,j,j'}\big\|_F^2
 &\le\sum_{\begin{smallmatrix}k,k':\ \dist(\nu,\nu')\\
 \ge\eta\max\{\diam(\nu),\diam(\nu')\}\end{smallmatrix}}
  |(\kernel,\sigma_{j,k}\otimes\sigma_{j',k'})_{\Omega\times\Omega}|^2\\
  &\le c_\kernel^2 \theta_{j,j'}^{2(q+1)} {(\eta dr)^{-4(q+1)}}
  m_q^2 N^2.
\end{align*}
Finally, summation over all levels yields
\begin{align*}
  \big\|{\bs K}^\Sigma-{\bs K}^\Sigma_{\varepsilon}\big\|_F^2
  	&= \sum_{j,j'}\big\|{\bs K}^\Sigma_{j,j'}
  	-{\bs K}^\Sigma_{\varepsilon,j,j'}\big\|_F^2\\
	&\le c_\kernel^2 {(\eta dr)^{-4(q+1)}}m_q^2 N^2
	\sum_{j,j'} \theta_{j,j'}^{2(q+1)}\\
	&\le c_\kernel^2 c_{\operatorname{sum}} {(\eta dr)^{-4(q+1)}}
	m_q^2 N^2\log N,
\end{align*}
which is the desired claim.
\end{proof}

\begin{corollary}
In case of uniformly distributed points ${\bs x}_i\in X$, 
we have $\big\|{\bs K}^\Sigma\big\|_F\sim N$. Thus, 
we immediately obtain
\[
  \frac{\big\|{\bs K}^\Sigma-{\bs K}^\Sigma_\varepsilon\big\|_F}
  {\big\|{\bs K}^\Sigma\big\|_F} \le c_\kernel
  \sqrt{c_{\operatorname{sum}}} {(\eta dr)^{-2(q+1)}} m_q \sqrt{\log N}.
\]
In particular, the matrix can be compressed to
$\mathcal{O}(m_q^2
N\log N)$ remaining coefficients without compromising
the overall accuracy.
\end{corollary}

\begin{proof}
We fix $j,j'$ and assume $j\ge j'$. In case of uniformly 
distributed points, it holds $\diam(v)\sim 2^{-j_\nu/d}$.
Hence, for the cluster $\nu_{j',k'}$, there exist only
 $\mathcal{O}([2^{j-j'}]^d)$ clusters $\nu_{j,k}$ from
level $j$, which do not satisfy the cut-off criterion 
\eqref{eq:cutoff}. Since each cluster contains at most
$m_q$ samplets, we hence arrive at
\[
  \sum_{j=0}^J \sum_{j'\le j}m_q^2( 2^{j'} 2^{(j-j')})^d 
  = m_q^2 \sum_{j=0}^J j 2^{jd} \sim m_q^2 N\log N,
\]
which implies the assertion.
\end{proof}

\begin{remark}
The chosen cut-off criterion \eqref{eq:cutoff} coincides 
with the so called \emph{admissibility condition} used
by hierarchical matrices. We particularly refer here to
\cite{Boe10}, as we will later on rely the \(\mathcal{H}^2\)-matrix 
method presented there for the fast assembly of the 
compressed kernel matrix.
\end{remark}

\subsection{Compressed matrix assembly}
For a given pair of clusters, we can now determine whether the 
corresponding entries need to be calculated. As there are
$\mathcal{O}(N)$ clusters, naively checking the cut-off criterion for
all pairs would still take $\mathcal{O}(N^{2})$ operations, however.
Hence, we require smarter means to determine the non-negligible cluster
pairs. For this purpose, we first state the transferability of the
cut-off criterion to son clusters, compare \cite{DHS} for a proof.

\begin{lemma}
Let $\nu$ and $\nu'$ be clusters satisfying the cut-off criterion
\eqref{eq:cutoff}. Then, for the son clusters $\nu_{\mathrm{son}}$ 
of $\nu$ and $\nu_{\mathrm{son}}'$ of $\nu'$, we have 
\begin{align*}
\dist(\nu,\nu_{\mathrm{son}}')
&\ge\eta\max\{\diam(\nu),\diam(\nu_{\mathrm{son}}')\},\\
\dist(\nu_{\mathrm{son}},\nu')
&\ge\eta\max\{\diam(\nu_{\mathrm{son}}),\diam(\nu')\},\\
\dist(\nu_{\mathrm{son}},\nu_{\mathrm{son}}')
&\ge\eta\max\{\diam(\nu_{\mathrm{son}}),\diam(\nu_{\mathrm{son}}')\}.
\end{align*}
\end{lemma}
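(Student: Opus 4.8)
The plan is to prove each of the three inequalities by the same elementary geometric argument, exploiting only two monotonicity facts: passing to a son cluster can only \emph{shrink} the diameter, i.e.\ $\diam(\nu_{\mathrm{son}})\le\diam(\nu)$ and $\diam(\nu_{\mathrm{son}}')\le\diam(\nu')$, and passing to a son cluster can only \emph{increase} the distance between the two clusters, i.e.\ $\dist(\nu,\nu_{\mathrm{son}}')\ge\dist(\nu,\nu')$ and $\dist(\nu_{\mathrm{son}},\nu_{\mathrm{son}}')\ge\dist(\nu_{\mathrm{son}},\nu')\ge\dist(\nu,\nu')$. Both facts follow directly from Definition~\ref{def:cluster-tree}: a son cluster is a subset of its father, so its point set is contained in the father's, hence its diameter is no larger, and the infimal pairwise distance over a smaller set of pairs is no smaller.

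First I would record these two monotonicity statements as the starting point. Then, for the first claimed inequality, I would chain them with the hypothesis \eqref{eq:cutoff}:
\[
\dist(\nu,\nu_{\mathrm{son}}')\ge\dist(\nu,\nu')\ge\eta\max\{\diam(\nu),\diam(\nu')\}\ge\eta\max\{\diam(\nu),\diam(\nu_{\mathrm{son}}')\},
\]
where the last step uses $\diam(\nu_{\mathrm{son}}')\le\diam(\nu')$ so the maximum on the right is no larger than the one in the middle. The second inequality is symmetric: swap the roles of $\nu$ and $\nu'$, using $\dist(\nu_{\mathrm{son}},\nu')\ge\dist(\nu,\nu')$ and $\diam(\nu_{\mathrm{son}})\le\diam(\nu)$. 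For the third inequality I would combine both reductions at once, namely $\dist(\nu_{\mathrm{son}},\nu_{\mathrm{son}}')\ge\dist(\nu,\nu')\ge\eta\max\{\diam(\nu),\diam(\nu')\}\ge\eta\max\{\diam(\nu_{\mathrm{son}}),\diam(\nu_{\mathrm{son}}')\}$, since both son diameters are bounded by their respective fathers'.

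There is essentially no hard part here; the only thing that needs care is making the two monotonicity claims explicit and correctly oriented — distances go up, diameters go down when descending the tree — since the whole argument is just one application of \eqref{eq:cutoff} sandwiched between these two monotonicities. I would therefore keep the write-up to a few lines, stating the monotonicity facts, then displaying the three chains, and noting that the proof is identical to the corresponding statement for wavelet compression in \cite{DHS}, to which the lemma statement already defers.
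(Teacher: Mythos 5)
Your proof is correct: since a son cluster's point set is contained in its father's, $\diam(\nu_{\mathrm{son}})\le\diam(\nu)$ and $\dist(\nu,\nu_{\mathrm{son}}')\ge\dist(\nu,\nu')$ (and likewise for the other combinations), and sandwiching the hypothesis \eqref{eq:cutoff} between these two monotonicities yields all three inequalities. The paper itself gives no proof and simply defers to \cite{DHS}; your elementary monotonicity argument is precisely the standard one behind that reference, so nothing is missing.
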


The lemma tells us that we may omit cluster pairs whose father 
clusters already satisfy the cut-off criterion. This will be essential
for the assembly of the compressed matrix.

The computation of the compressed kernel matrix
can be sped up further by using 
\(\Hcal^2\)-matrix techniques, see 
\cite{HB02,Gie01}. Similarly to \cite{AHK14,HKS05,Kae07}, we shall
rely here on \(\Hcal^2\)-matrices for this purpose.
The idea of \(\Hcal^2\)-matrices is to approximate the kernel
interaction
for sufficiently distant clusters \(\nu\) and \(\nu'\) in the sense 
of the admissibility condition \eqref{eq:cutoff} by means
of the interpolation based \(\Hcal^2\)-matrix approach.
More precisely, given a suitable set of interpolation 
points \(\{{\bs\xi}_t^\nu\}_t\) for each cluster \(\nu\) with 
associated Lagrange polynomials \(\{\mathcal{L}_{t}^{\nu}
({\bs x})\}_t\), we introduce the interpolation operator
\[
 \mathcal{I}^{\nu,\nu'}[\kernel]({\bs x}, {\bs y}) 
  	= \sum_{s,t} \kernel({\bs\xi}_{s}^{\nu}, {\bs\xi}_{t}^{\nu'}) 
		\mathcal{L}_{s}^{\nu}({\bs x}) \mathcal{L}_{t}^{\nu'}({\bs y})
\]
and approximate an admissible matrix block via
\begin{align*}
{\bs K}^\Delta_{\nu,\nu'}
&=[(\kernel,\delta_{\bs x}\otimes
\delta_{\bs y})_{\Omega\times\Omega}]_{{\bs x}\in\nu,{\bs y}\in\nu'}\\
&\approx\sum_{s,t} \kernel({\bs\xi}_{s}^{\nu}, {\bs\xi}_{t}^{\nu'}) 
		[(\mathcal{L}_{s}^{\nu},\delta_{\bs x})_\Omega]_{{\bs x}\in\nu}
		[(\mathcal{L}_{t}^{\nu'},\delta_{\bs y})_\Omega]_{{\bs y}\in\nu'}
		\defis{\bs V}^{\nu}_\Delta{\bs S}^{\nu,\nu'}
		({\bs V}^{\nu'}_\Delta)^\intercal.
\end{align*}
Herein, the \emph{cluster bases} are given according to
\begin{equation}\label{eq:cluster bases}
{\bs V}^{\nu}_\Delta\isdef [(\mathcal{L}_{s}^{\nu},
\delta_{\bs x})_\Omega]_{{\bs x}\in\nu},\quad
{\bs V}^{\nu'}_\Delta\isdef[(\mathcal{L}_{t}^{\nu'},
\delta_{\bs y})_\Omega]_{{\bs y}\in\nu'},
\end{equation}
while the \emph{coupling matrix} is given by
\(
{\bs S}^{\nu,\nu'}\isdef[\kernel({\bs\xi}_{s}^{\nu},
{\bs\xi}_{t}^{\nu'})]_{s,t}.
\)

Directly transforming the cluster bases into their corresponding 
samplet representation results in a log-linear cost. This can be 
avoided by the use of nested cluster bases, as they have been 
introduced for \(\Hcal^2\)-matrices. For the sake of simplicity, we
assume from now on that tensor product polynomials of degree 
\(p\) are used for the kernel interpolation at all different cluster 
combinations. As a consequence, the Lagrange polynomials 
of a father cluster can exactly be represented by those of the
son clusters. Introducing the \emph{transfer matrices}
\(
{\bs T}^{\nu_{\mathrm{son}}}
\isdef[\mathcal{L}_s^\nu({\bs\xi}_t^{\nu_{\mathrm{son}}})]_{s,t},
\)
there holds
\[
\mathcal{L}_s^\nu({\bs x})=\sum_t{\bs T}^{\nu_{\mathrm{son}}}_{s,t}
\mathcal{L}_t^{\nu_{\mathrm{son}}}({\bs x}),\quad{\bs x}
\in B_{\nu_{\mathrm{son}}}.
\]
Exploiting this relation in the construction of the cluster bases 
\eqref{eq:cluster bases} finally leads to
\[
{\bs V}^{\nu}_\Delta=\begin{bmatrix}
{\bs V}^{\nu_{\mathrm{son}_1}}_\Delta{\bs T}^{\nu_{\mathrm{son}_1}}\\
{\bs V}^{\nu_{\mathrm{son}_2}}_\Delta{\bs T}^{\nu_{\mathrm{son}_2}}
\end{bmatrix}.
\]

Combining this refinement relation with the recursive nature of the
samplet basis, results
in the variant of the discrete samplet transform summarized in
Algorithm~\ref{algo:multiscaleClusterBasis}.\bigskip

\begin{center}
\scalebox{0.8}{
\begin{algorithm}[H]
\caption{Recursive computation of the multiscale cluster basis}
\label{algo:multiscaleClusterBasis}
\KwData{Cluster tree $\Tcal$, transformations 
$[{\bs Q}_{j,\Phi}^{\nu}$, ${\bs Q}_{j,\Sigma}^{\nu}]$,
nested cluster bases ${\bs V}_{\Delta}^{\nu}$ for leaf clusters and
transformation matrices ${\bs T}^{\nu_{\mathrm{son}_1}}\), \(
{\bs T}^{\nu_{\mathrm{son}_2}}$ for non-leaf clusters.
}
\KwResult{Multiscale cluster basis matrices ${\bs V}_{\Phi}^{\nu}$, 
${\bs V}_{\Sigma}^{\nu}$ for all clusters $\nu \in\Tcal$.}
	
\Begin{
\FuncSty{computeMultiscaleClusterBasis}($X$)\;
}
\end{algorithm}}
\end{center}
\begin{center}
\scalebox{0.8}{
\begin{function}[H]
\caption{computeMultiscaleClusterBasis($\nu$)}
\Begin{
\uIf{$\nu$ is a leaf cluster}{
store $\begin{bmatrix}
{\bs V}_{\Phi}^{\nu} \\
{\bs V}_{\Sigma}^{\nu}
\end{bmatrix}\isdef\big[{\bs Q}_{j,\Phi}^{\nu}, 
{\bs Q}_{j,\Sigma}^{\nu}\big]^{\intercal} {\bs V}_{\Delta}^{\nu}$
		}
		\Else{
			\For{all sons $\nu'$ of $\nu$}{
				$\computeMultiscaleClusterBasis(\nu')$
			}
			store $\begin{bmatrix}
				{\bs V}_{\Phi}^{\nu} \\
				{\bs V}_{\Sigma}^{\nu}
			\end{bmatrix}\isdef\big[{\bs Q}_{j,\Phi}^{\nu}, 
			{\bs Q}_{j,\Sigma}^{\nu}\big]^{\intercal} \begin{bmatrix}
{\bs V}^{\nu_{\mathrm{son}_1}}_\Phi{\bs T}^{\nu_{\mathrm{son}_1}}\\
{\bs V}^{\nu_{\mathrm{son}_2}}_\Phi{\bs T}^{\nu_{\mathrm{son}_2}}
\end{bmatrix}$
		}
	}
\end{function}}
\end{center}

Having the multiscale cluster bases at our disposal, the next step is
the assembly of the compressed kernel matrix. The computation of the
required matrix blocks is exclusively 
based on the two refinement relations
\[
\begin{bmatrix}
{\bs K}_{\nu,\nu'}^{\Phi,\Phi} & {\bs K}_{\nu,\nu'}^{\Phi,\Sigma}\\
{\bs K}_{\nu,\nu'}^{\Sigma,\Phi} & {\bs K}_{\nu,\nu'}^{\Sigma,\Sigma}
\end{bmatrix}
=\begin{bmatrix}
{\bs K}_{\nu,\nu_{\mathrm{son}_1}'}^{\Phi,\Phi} &
{\bs K}_{\nu,\nu_{\mathrm{son}_2}'}^{\Phi,\Phi}\\
{\bs K}_{\nu,\nu_{\mathrm{son}_1}'}^{\Sigma,\Phi} &
{\bs K}_{\nu,\nu_{\mathrm{son}_2}'}^{\Sigma,\Phi}
\end{bmatrix}\big[{\bs Q}_{j,\Phi}^{\nu'}, 
			{\bs Q}_{j,\Sigma}^{\nu'}\big]
\]
and
\[
\begin{bmatrix}
{\bs K}_{\nu,\nu'}^{\Phi,\Phi} & {\bs K}_{\nu,\nu'}^{\Phi,\Sigma}\\
{\bs K}_{\nu,\nu'}^{\Sigma,\Phi} & {\bs K}_{\nu,\nu'}^{\Sigma,\Sigma}
\end{bmatrix}
=
\big[{\bs Q}_{j,\Phi}^{\nu}, 
			{\bs Q}_{j,\Sigma}^{\nu}\big]^\intercal
			\begin{bmatrix}
{\bs K}_{\nu_{\mathrm{son}_1},\nu'}^{\Phi,\Phi} &
{\bs K}_{\nu_{\mathrm{son}_1},\nu'}^{\Phi,\Phi}\\
{\bs K}_{\nu_{\mathrm{son}_2},\nu'}^{\Sigma,\Phi} &
{\bs K}_{\nu_{\mathrm{son}_2},\nu'}^{\Sigma,\Phi}
\end{bmatrix},
\]
where we set
\[
\begin{bmatrix}
{\bs K}_{\nu,\nu'}^{\Phi,\Phi} & {\bs K}_{\nu,\nu'}^{\Phi,\Sigma}\\
{\bs K}_{\nu,\nu'}^{\Sigma,\Phi} & {\bs K}_{\nu,\nu'}^{\Sigma,\Sigma}
\end{bmatrix}
\isdef
\begin{bmatrix}
(\kernel,{\bs \Phi}^\nu\otimes{\bs\Phi}^{\nu'})_{\Omega\times\Omega} &
(\kernel,{\bs \Phi}^\nu\otimes{\bs\Sigma}^{\nu'})_{\Omega\times\Omega}
\\
(\kernel,{\bs \Sigma}^\nu\otimes{\bs\Phi}^{\nu'})_{\Omega\times\Omega}
&
(\kernel,{\bs \Sigma}^\nu\otimes{\bs\Sigma}^{\nu'})_{\Omega\times\Omega} 
\end{bmatrix}.
\]

We obtain the following function, which is the key ingredient for the
computation of the compressed kernel matrix.

\begin{center}
\scalebox{0.8}{
\begin{function}[H]
\caption{recursivelyDetermineBlock($\nu$, $\nu'$)}
\KwResult{Approximation of the block \scalebox{1}{$\begin{bmatrix}
{\bs K}_{\nu,\nu'}^{\Phi,\Phi} & {\bs K}_{\nu,\nu'}^{\Phi,\Sigma}\\
{\bs K}_{\nu,\nu'}^{\Sigma,\Phi} & {\bs K}_{\nu,\nu'}^{\Sigma,\Sigma}
\end{bmatrix}$}.}
	
\Begin{
\uIf{$(\nu, \nu')$ is admissible}{
\Return{\scalebox{1}{$\begin{bmatrix}
{\bs V}_{\Phi}^{\nu} \\
{\bs V}_{\Sigma}^{\nu}
\end{bmatrix}
{\bs S}^{\nu,\nu'} \big[
({\bs V}_{\Phi}^{\nu'})^\intercal,
({\bs V}_{\Sigma}^{\nu'})^\intercal
\big]$}}
}
\uElseIf{$\nu$ and $\nu'$ are leaf clusters}{
\Return{\scalebox{1}{$\big[{\bs Q}_{j,\Phi}^{\nu}, 
{\bs Q}_{j,\Sigma}^{\nu}\big]^{\intercal}{\bs K}_{\nu,\nu'}^{\Delta}
\big[{\bs Q}_{j,\Phi}^{\nu'}, 
{\bs Q}_{j,\Sigma}^{\nu'}\big]$}}
		}
		\uElseIf{$\nu'$ is not a leaf cluster and $\nu$ is a leaf cluster}{
			\For{all sons $\nu_{\mathrm{son}}'$ of $\nu'$}{
				\scalebox{1}{$\begin{bmatrix}
{\bs K}_{\nu,\nu_{\mathrm{son}}'}^{\Phi,\Phi} & 
{\bs K}_{\nu,\nu_{\mathrm{son}}'}^{\Phi,\Sigma}\\
{\bs K}_{\nu,\nu_{\mathrm{son}}'}^{\Sigma,\Phi} & 
{\bs K}_{\nu,\nu_{\mathrm{son}}'}^{\Sigma,\Sigma}
\end{bmatrix}$} $
\isdef\recursivelyDetermineBlock(\nu, \nu_{\mathrm{son}'})$
			}
			\Return{\scalebox{1}{$\begin{bmatrix}
{\bs K}_{\nu,\nu_{\mathrm{son},1}'}^{\Phi,\Phi} &
{\bs K}_{\nu,\nu_{\mathrm{son},2}'}^{\Phi,\Phi}\\
{\bs K}_{\nu,\nu_{\mathrm{son},1}'}^{\Sigma,\Phi} &
{\bs K}_{\nu,\nu_{\mathrm{son},2}'}^{\Sigma,\Phi}
\end{bmatrix}\big[{\bs Q}_{j,\Phi}^{\nu'}, 
{\bs Q}_{j,\Sigma}^{\nu'}\big]$}}
}
\uElseIf{$\nu$ is not a leaf cluster and $\nu'$ is a leaf cluster}{
\For{all sons \(\nu_{\mathrm{son}}\) of \(\nu\)}{
\scalebox{1}{$\begin{bmatrix}
{\bs K}_{\nu_{\mathrm{son}},\nu'}^{\Phi,\Phi} & 
{\bs K}_{\nu_{\mathrm{son}},\nu'}^{\Phi,\Sigma}\\
{\bs K}_{\nu_{\mathrm{son}},\nu'}^{\Sigma,\Phi} & 
{\bs K}_{\nu_{\mathrm{son}},\nu'}^{\Sigma,\Sigma}
\end{bmatrix}$} $\isdef
\recursivelyDetermineBlock(\nu_{\mathrm{son}}, \nu')$
}
\Return{\scalebox{1}{$\big[{\bs Q}_{j,\Phi}^{\nu}, 
{\bs Q}_{j,\Sigma}^{\nu}\big]^\intercal\begin{bmatrix}
{\bs K}_{\nu_{\mathrm{son}_1},\nu'}^{\Phi,\Phi} &
{\bs K}_{\nu_{\mathrm{son}_1},\nu'}^{\Phi,\Phi}\\
{\bs K}_{\nu_{\mathrm{son}_2},\nu'}^{\Sigma,\Phi} &
{\bs K}_{\nu_{\mathrm{son}_2},\nu'}^{\Sigma,\Phi}
\end{bmatrix}$}.
}
}
\Else(){
\For{all sons $\nu_{\mathrm{son}}$ of $\nu$ {\bf and} 
all sons $\nu_{\mathrm{son}}'$ of $\nu'$}{
\scalebox{1}{$\begin{bmatrix}
{\bs K}_{\nu_{\mathrm{son}},\nu_{\mathrm{son}}'}^{\Phi,\Phi} & 
{\bs K}_{\nu_{\mathrm{son}},\nu_{\mathrm{son}}'}^{\Phi,\Sigma}\\
{\bs K}_{\nu_{\mathrm{son}},\nu_{\mathrm{son}}'}^{\Sigma,\Phi} & 
{\bs K}_{\nu_{\mathrm{son}},\nu_{\mathrm{son}}'}^{\Sigma,\Sigma}
\end{bmatrix}$}$\isdef\recursivelyDetermineBlock(\nu_{\mathrm{son}},
\nu_{\mathrm{son}'})$
}
\Return{\scalebox{1}{$\big[{\bs Q}_{\Phi}^{\nu}, 
{\bs Q}_{\Sigma}^{\nu}\big]^\intercal\begin{bmatrix}
{\bs K}_{\nu_{\mathrm{son}_1},\nu_{\mathrm{son}_1}'}^{\Phi,\Phi} &
{\bs K}_{\nu_{\mathrm{son}_1},\nu_{\mathrm{son}_2}'}^{\Phi,\Phi}\\
{\bs K}_{\nu_{\mathrm{son}_2},\nu_{\mathrm{son}_1}'}^{\Phi,\Phi} &
{\bs K}_{\nu_{\mathrm{son}_2},\nu_{\mathrm{son}_2}'}^{\Phi,\Phi}
\end{bmatrix} \big[{\bs Q}_{\Phi}^{\nu'}, 
{\bs Q}_{\Sigma}^{\nu'}\big]$}}
}
}
\end{function}}
\end{center}
\textcolor{red}{We remark that the algorithm never requires the formation of the
entire \(\mathcal{H}^2\)-matrix, as it only embeds the multilevel
interpolation procedure to rapidly evaluate admissible blocks.
In particular, the evaluation of the
coupling matrices can be performed on the fly.}

Now, in order to assemble the compressed kernel matrix, we require
two nested recursive calls of the cluster tree, which is traversed in
a depth first search way. Algorithm~\ref{algo:h2Wavelet}
first computes the lower right matrix block and advances from bottom
to top and from right to left. To this end, the two recursive
functions \texttt{setupColumn} and \texttt{setupRow} are introduced.%

\begin{center}
\scalebox{0.8}{
\begin{algorithm}[H]
	\caption{Computation of the compressed kernel matrix}
	\label{algo:h2Wavelet}	
	\KwData{Cluster tree $\Tcal$, multiscale cluster bases
	${\bs V}_{\Phi}^{\nu}$, ${\bs V}_{\Sigma}^{\nu}$
	and transformations $[{\bs Q}_{j,\Phi}^{\nu},
	{\bs Q}_{j,\Sigma}^{\nu}]$.}
	\KwResult{Sparse matrix ${\bs K}^\Sigma_\varepsilon$}
	
	\Begin{
		\FuncSty{setupColumn}($X$)
		
		store the blocks the remaining blocks
		${\bs K}^\Sigma_{\varepsilon,\nu,X}$ for \(%
\nu\in\Tcal\setminus\{X\}\)   
		in ${\bs K}^\Sigma_\varepsilon$ (they have already
		 been computed by earlier calls to %
		  \FuncSty{recursivelyDetermineBlock})
	}
\end{algorithm}}
\end{center}

The purpose of the function \texttt{setupColumn} is to
recursively traverse the column cluster tree, i.e.\ the
cluster tree associated to the columns of the matrix.
Before returning, each instance of \texttt{setupColumn}
calls the function \texttt{setupRow}, which performs
the actual assembly of the compressed matrix.\bigskip

\begin{center}
\scalebox{0.8}{
\begin{function}[H]
	\caption{setupColumn($\nu'$)}
	
	\Begin{
		\For{all sons $\nu_{\mathrm{son}}'$ of $\nu'$}{
			$\setupColumn(\nu_{\mathrm{son}}')$
		}
		store ${\bs K}^\Sigma_{\varepsilon,X,\nu'}\isdef 
		\FuncSty{setupRow}(X, \nu')$
		in ${\bs K}^\Sigma_{\varepsilon}$
	}
\end{function}}
\end{center}

For a given column cluster \(\nu'\), the function
\texttt{setupRow}
recursively traverses the row cluster tree, i.e.\
the cluster tree associated to the rows of the matrix,
and
assembles the corresponding column of the compressed
 matrix.
The function reuses the already computed blocks to the
right of the column
under consideration and blocks at the bottom of the
very same
column.

\begin{center}
\scalebox{0.8}{
\begin{function}[H]
\caption{setupRow($\nu$, $\nu'$)}
\Begin{
\uIf{$\nu$ is not a leaf}{
\For{all sons \(\nu_{\mathrm{son}}\) of \(\nu\)}{
\uIf{\(\nu_{\mathrm{son}}\) and \(\nu'\) are not %
admissible}{
\scalebox{1}{$\begin{bmatrix}
{\bs K}_{\nu_{\mathrm{son}},\nu'}^{\Phi,\Phi} &
{\bs K}_{\nu_{\mathrm{son}},\nu'}^{\Phi,\Sigma}\\
{\bs K}_{\nu_{\mathrm{son}},\nu'}^{\Sigma,\Phi} &
{\bs K}_{\nu_{\mathrm{son}},\nu'}^{\Sigma,\Sigma}
\end{bmatrix}$}
$\isdef \setupRow(\nu_{\mathrm{son}}, \nu')$
				}
\Else{
\scalebox{1}{$\begin{bmatrix}
{\bs K}_{\nu_{\mathrm{son}},\nu'}^{\Phi,\Phi} &
{\bs K}_{\nu_{\mathrm{son}},\nu'}^{\Phi,\Sigma}\\
{\bs K}_{\nu_{\mathrm{son}},\nu'}^{\Sigma,\Phi} &
{\bs K}_{\nu_{\mathrm{son}},\nu'}^{\Sigma,\Sigma}
\end{bmatrix}$}
$\isdef\recursivelyDetermineBlock(\nu_{\mathrm{son}},%
\nu')$}
			}
\scalebox{1}{$
\begin{bmatrix}
{\bs K}_{\nu,\nu'}^{\Phi,\Phi} &
{\bs K}_{\nu,\nu'}^{\Phi,\Sigma}\\
{\bs K}_{\nu,\nu'}^{\Sigma,\Phi} &
{\bs K}_{\nu,\nu'}^{\Sigma,\Sigma}
\end{bmatrix}\isdef\big[{\bs Q}_{\Phi}^{\nu}, 
			{\bs Q}_{\Sigma}^{\nu}
\big]^\intercal\begin{bmatrix}
{\bs K}_{\nu_{\mathrm{son}_1},\nu'}^{\Phi,\Phi} &
{\bs K}_{\nu_{\mathrm{son}_1},\nu'}^{\Phi,\Phi}\\
{\bs K}_{\nu_{\mathrm{son}_2},\nu'}^{\Sigma,\Phi} &
{\bs K}_{\nu_{\mathrm{son}_2},\nu'}^{\Sigma,\Phi}
\end{bmatrix}$
}
}
\Else{
\uIf{$\nu'$ is a leaf cluster}{
\scalebox{1}{$\begin{bmatrix}
{\bs K}_{\nu_{\mathrm{son}},\nu'}^{\Phi,\Phi} & 
{\bs K}_{\nu_{\mathrm{son}},\nu'}^{\Phi,\Sigma}\\
{\bs K}_{\nu_{\mathrm{son}},\nu'}^{\Sigma,\Phi} &
{\bs K}_{\nu_{\mathrm{son}},\nu'}^{\Sigma,\Sigma}
\end{bmatrix}$}
$\isdef\recursivelyDetermineBlock(%
\nu_{\mathrm{son}}, \nu')$
}
\Else{
\For{all sons \(\nu_{\mathrm{son}}'\) of \(\nu\)'}{
\uIf{\(\nu\) and \(\nu_{\mathrm{son}}'\) %
are not admissible}{
load already computed block  \scalebox{1}{%
$\begin{bmatrix}
{\bs K}_{\nu,\nu_{\mathrm{son}}'}^{\Phi,\Phi} &
{\bs K}_{\nu,\nu_{\mathrm{son}}'}^{\Phi,\Sigma}\\
{\bs K}_{\nu,\nu_{\mathrm{son}}'}^{\Sigma,\Phi} &
{\bs K}_{\nu,\nu_{\mathrm{son}}'}^{\Sigma,\Sigma}
\end{bmatrix}$}
}
\Else
{
 \scalebox{1}{$\begin{bmatrix}
{\bs K}_{\nu,\nu_{\mathrm{son}}'}^{\Phi,\Phi} & 
{\bs K}_{\nu,\nu_{\mathrm{son}}'}^{\Phi,\Sigma}\\
{\bs K}_{\nu,\nu_{\mathrm{son}}'}^{\Sigma,\Phi} & 
{\bs K}_{\nu,\nu_{\mathrm{son}}'}^{\Sigma,\Sigma}
\end{bmatrix}$}
$\isdef \recursivelyDetermineBlock(%
\nu, \nu_{\mathrm{son}'})$
}
}
}
\scalebox{1}{
$\begin{bmatrix}{\bs K}_{\nu,\nu'}^{\Phi,\Phi} &
{\bs K}_{\nu,\nu'}^{\Phi,\Sigma}\\
{\bs K}_{\nu,\nu'}^{\Sigma,\Phi} &
{\bs K}_{\nu,\nu'}^{\Sigma,\Sigma}
\end{bmatrix}\isdef\begin{bmatrix}
{\bs K}_{\nu,\nu_{\mathrm{son}_1}'}^{\Phi,\Phi} &
{\bs K}_{\nu,\nu_{\mathrm{son}_2}'}^{\Phi,\Phi}\\
{\bs K}_{\nu,\nu_{\mathrm{son}_1}'}^{\Sigma,\Phi} &
{\bs K}_{\nu,\nu_{\mathrm{son}_2}'}^{\Sigma,\Phi}
\end{bmatrix}\big[{\bs Q}_{\Phi}^{\nu'}, 
			{\bs Q}_{\Sigma}^{\nu'}\big]$}
}
store ${\bs K}_{\nu,\nu'}^{\Sigma,\Sigma}$ %
as part of ${\bs K}^\Sigma_\varepsilon$
		
\Return{\scalebox{1}{$\begin{bmatrix}
{\bs K}_{\nu,\nu'}^{\Phi,\Phi} &
{\bs K}_{\nu,\nu'}^{\Phi,\Sigma}\\
{\bs K}_{\nu,\nu'}^{\Sigma,\Phi} &
{\bs K}_{\nu,\nu'}^{\Sigma,\Sigma}
\end{bmatrix}$}}
	}
\end{function}}
\end{center}

\begin{remark}
Algorithm~\ref{algo:h2Wavelet} has a cost of
\(\mathcal{O}(N\log N)\) and requires an additional
storage of \(\mathcal{O}(N\log N)\) if all stored
blocks are directly released when they are not required
anymore. We refer to \cite{AHK14} for all the details.
\end{remark}

\section{Numerical results I\!I}\label{sec:Num2}
All computations in this section have been performed on
a single node with two Intel Xeon E5-2650 v3 @2.30GHz
CPUs and up to 512GB of main memory\footnote{The full
specifications can be found on
https://www.euler.usi.ch/en/research/resources.}.
In order to obtain consistent timings, only a single
 core was used for all computations.

\subsection*{Benchmark problem}
\textcolor{red}{To benchmark the compression of kernel matrices,
we consider the exponential kernel
\[
k({\bs x},{\bs y})=e^{-100
\|{\bs x}-{\bs y}\|_2},
\]
evaluated at an increasing number of non-uniformly 
distributed cloud of point samples. Namely, in $d=1$ 
dimension, we consider standard normally distributed 
points. In $d>1$ dimensions, the random sample points 
are drawn from the mixture of two multivariate Gaussian 
distributions with zero expectation covariances
\[
\begin{bmatrix}
1&   -1/2&    0\\
-1/2 & 29/100 &    0\\
0 &      0& 1
\end{bmatrix}\quad\text{and}\quad
\begin{bmatrix}
1&   1/2&    0\\
1/2 & 29/100 &    0\\
0 &      0& 1
\end{bmatrix}.
\]
Note that the last coordinate is dropped if \(d=2\). The
resulting data sets are visualized in Figure~\ref{fig:dataSets}
with the corresponding bounding boxes of the domain and of the tree leaves.
For \(d=2\), the bounding box is given by \([-5.21,4.56]\times[-2.50,2.48]\),
while it is given by \([-5.21,4.56]\times[-2.50,2.48]\times[-4.81,4.84]\) 
for \(d=3\). As can be seen, the points have a much higher density at the 
center of the point cloud, which results in an adaptively refined cluster tree.}

\begin{figure}[htb]
\begin{center}
\scalebox{0.8}{
\includegraphics[scale=0.08,trim = 220 150 220 150,clip]{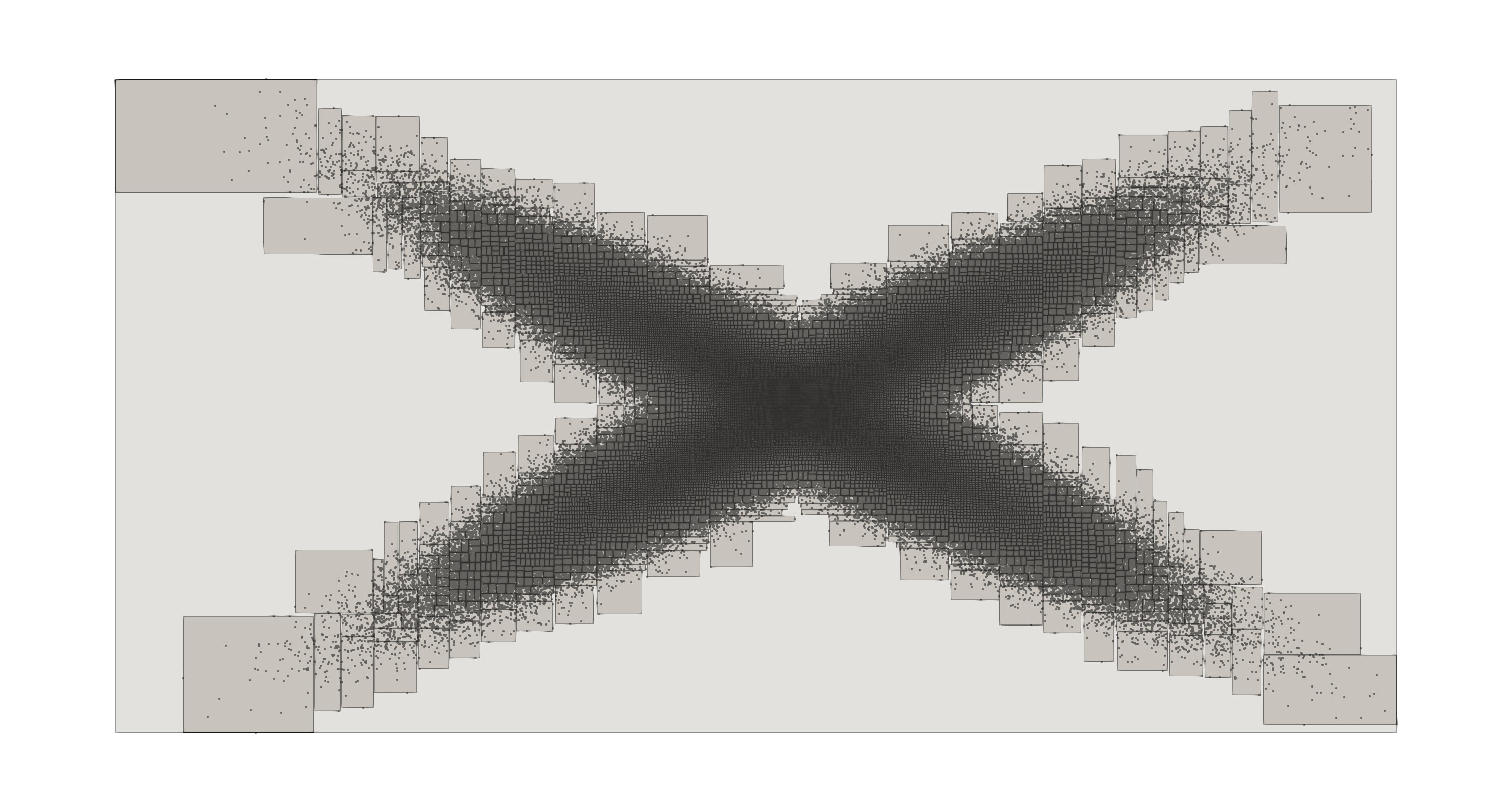}\quad
\includegraphics[scale=0.077,trim = 170 125 170 125,clip]{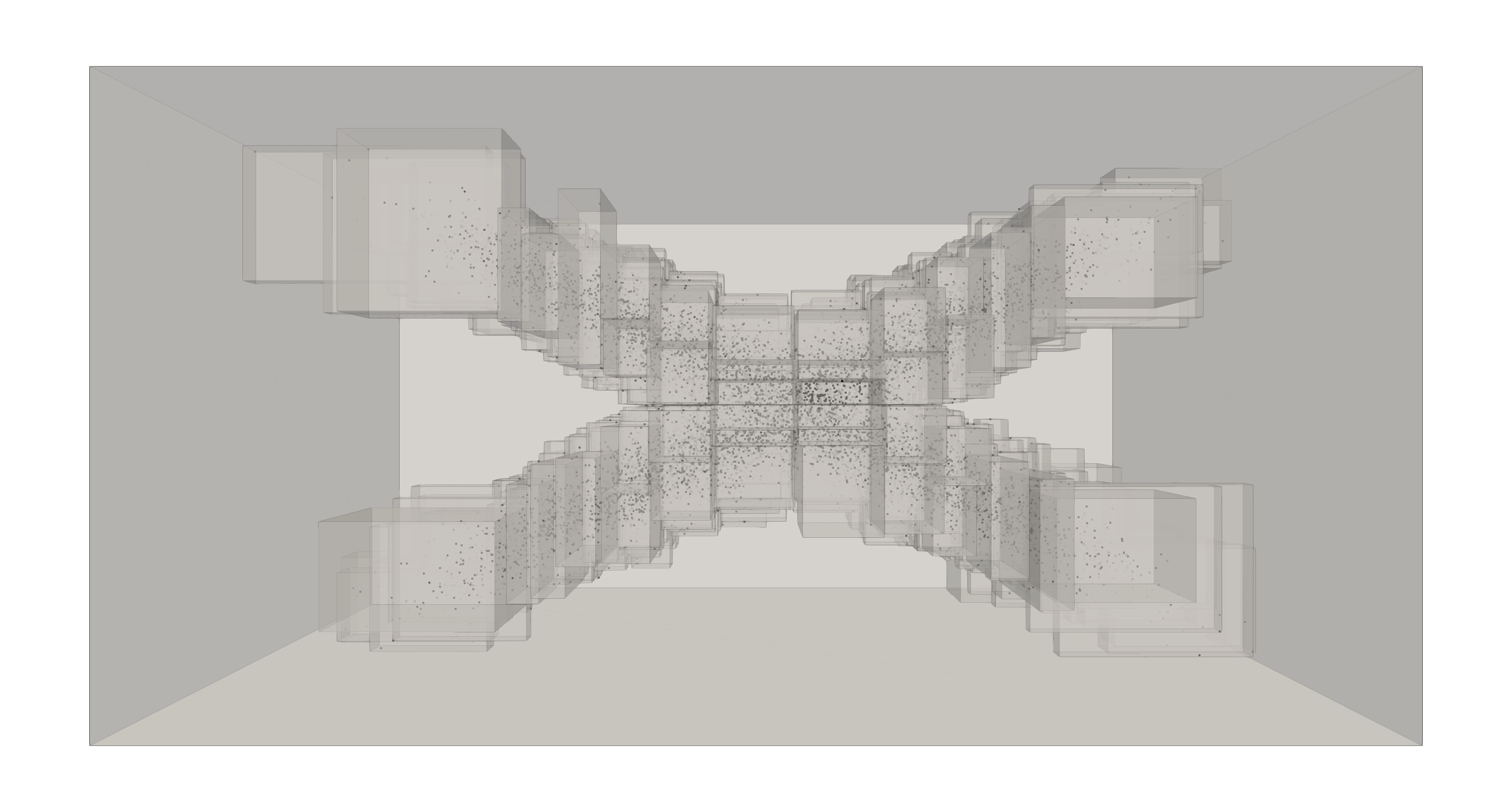}}
\caption{\label{fig:dataSets} Test data sets with bounding boxes for the domain
and for the tree leaves for \(d=2\) (left) and \(d=3\) (right).}
\end{center}
\end{figure}

As a measure of sparsity, we introduce the
\emph{average number of nonzeros per row}
\[
\operatorname{anz}({\bs A})
\isdef\frac{\operatorname{nnz}({\bs A})}{N},\quad
{\bs A}\in\Rbb^{N\times N},
\]
where \(\operatorname{nnz}({\bs A})\) is the number of
nonzero entries of \({\bs A}\). Besides the compression, 
we also report the fill-in generated by the 
Cholesky factorization in combination with the
nested dissection reordering from \cite{KK98}.
For the reordering and the Cholesky
factorization, we rely on \textsc{Matlab}%
 R2020a\footnote{Version 9.8.0.1396136, 
The MathWorks Inc., Natick, Massachusetts, 2020.},
 while the 
samplet compression is implemented in \texttt{C++11}
using the 
\texttt{Eigen} template
library\footnote{\texttt{https://eigen.tuxfamily.org/}}
for linear algebra operations. For the computations,
we consider
a polynomial degree of 3 for the kernel interpolation
and \(q+1=3\) vanishing moments for the samplets.
We set \(\eta=2\) for \(d=1\), \(\eta=1.25\) for \(d=2\)
and \(\eta=0.5\) for \(d=3\).
In addition, we have performed a thresholding of the
computed matrix  coefficients that were smaller than
\(\varepsilon=10^{-5}\).

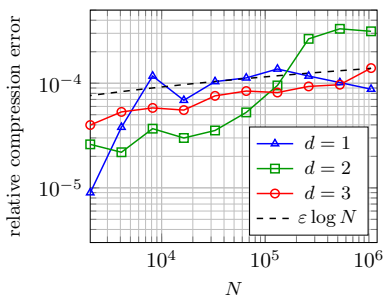
\begin{figure}[htb]
\begin{center}
\scalebox{0.8}{
\begin{tikzpicture}
\begin{loglogaxis}[width=0.42\textwidth,grid=both,
 ymin= 3e-6, ymax = 0.5e-3, xmin = 2048, xmax =1.2e6,
legend style={legend pos=south east,font=\small},
 ylabel={\small relative compression error}, xlabel ={\small $N$}]
\addplot[line width=0.7pt,color=blue,mark=triangle]
table[x=Npts,y=err1D]{./ResultsNew/compressionErrors.txt};
\addlegendentry{$d=1$};
\addplot[line width=0.7pt,color=darkgreen,mark=square]
table[x=Npts,y=err2D]{./ResultsNew/compressionErrors.txt};
\addlegendentry{$d=2$};
\addplot[line width=0.7pt,color=red,mark=o]
table[x=Npts,y=err3D]{./ResultsNew/compressionErrors.txt};
\addlegendentry{$d=3$};
\addplot[line width=0.7pt,color=black,dashed]
table[x=npts,y expr={1e-5 * ln(x)}]{%
./Results/matlabLogger1.txt};
\addplot[line width=0.7pt,color=black,dashed]
table[x=npts,y expr={0.1e-6 * x^2}]{%
./Results/matlabLogger1.txt};
\addlegendentry{$\varepsilon\log N$};
\end{loglogaxis}
\end{tikzpicture}}
\caption{\label{fig:compErrs}Relative compression errors for \(d=1,2,3\).}
\end{center}
\end{figure}

\textcolor{red}{Figure~\ref{fig:compErrs} shows the resulting relative 
compression errors, which have been computed by estimating the Frobenius 
norm from 20 randomly chosen columns of \({\bs K}^\Sigma\) and 
\({\bs K}^\Sigma_\varepsilon\), respectively. As can be seen, 
for all dimensions under consideration, the compression errors
roughly follow the theoretical rate of \(\varepsilon\log N\).}

\begin{figure}[htb]
\begin{center}
\scalebox{0.8}{
\begin{tikzpicture}
\begin{loglogaxis}[width=0.42\textwidth,grid=both,
 ymin= 2e-1, ymax = 0.2e5, xmin = 2048, xmax =1.2e6,
    legend style={legend pos=south east,font=\small},
     ylabel={\small wall time}, xlabel ={\small $N$}]
\addplot[line width=0.7pt,color=blue,mark=triangle]
table[x=npts,y=ctim]{./ResultsNew/matlabLogger1.txt};
\label{pgfplots:plot1D}
\addplot[line width=0.7pt,color=darkgreen,mark=square]
table[x=npts,y=ctim]{./ResultsNew/matlabLogger2.txt};
\label{pgfplots:plot2D}
\addplot[line width=0.7pt,color=red,mark=o]
table[x=npts,y=ctim]{./ResultsNew/matlabLogger3.txt};
\label{pgfplots:plot3D}
\addplot[line width=0.7pt,color=black, dashed]
table[x=npts,y expr={1e-4 * x}]{%
./ResultsNew/matlabLogger1.txt};
\addplot[line width=0.7pt,color=black, dashed]
table[x=npts,y expr={3e-5 * x * ln(x)}]{%
./ResultsNew/matlabLogger1.txt};
\addplot[line width=0.7pt,color=black, dashed]
table[x=npts,y expr={1e-5 * x * ln(x)^2}]{%
./ResultsNew/matlabLogger1.txt};
\addplot[line width=0.7pt,color=black, dashed]
table[x=npts,y expr={0.5e-5 * x * ln(x)^3}]{%
./ResultsNew/matlabLogger1.txt};
\label{pgfplots:asymps}
\end{loglogaxis}
\begin{loglogaxis}[%
xshift=0.405\textwidth,width=0.42\textwidth,grid=both,
ymin= 80, ymax = 3e3, xmin = 2048, xmax =1.2e6,
ytick={1e1, 1e2, 1e3, 1e4},
legend style={legend pos=south east,font=\small},
ylabel={\small $\operatorname{anz}({\bs K}%
^\Sigma_\varepsilon)$}, xlabel ={\small $N$}]
\addplot[line width=0.7pt,color=blue,%
mark=triangle] table[x=npts,
y expr = {\thisrow{nzS}}]{./ResultsNew/matlabLogger1.txt};
\addplot[line width=0.7pt,color=darkgreen,mark=square]%
 table[x=npts,
y expr = {\thisrow{nzS}}]{./ResultsNew/matlabLogger2.txt};
\addplot[line width=0.7pt,color=red,mark=o]%
 table[x=npts,
y expr = {\thisrow{nzS}}]{./ResultsNew/matlabLogger3.txt};
\end{loglogaxis}
\matrix[
    matrix of nodes,
    anchor=north west,
    draw,
    inner sep=0.1em,
    column 1/.style={nodes={anchor=center}},
    column 2/.style={nodes={anchor=west},font=\strut},
    draw
  ]
  at([xshift=0.02\textwidth]current axis.north east){
    \ref{pgfplots:plot1D}& \(d=1\)\\
    \ref{pgfplots:plot2D}& \(d=2\)\\
    \ref{pgfplots:plot3D}& \(d=3\)\\
    \ref{pgfplots:asymps}& \(N\!\log^\alpha\!\! N\)\\};
\end{tikzpicture}}
\caption{\label{fig:compTimesNNZ}Assembly times
(left) and average numbers of nonzeros per row (right)
versus the number sample points $N$ in case of the
 exponential kernel matrix.}
\end{center}
\end{figure}
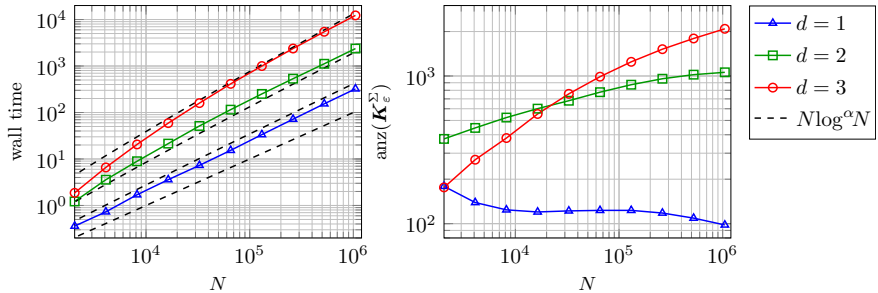

The left-hand side of Figure~\ref{fig:compTimesNNZ}
shows the wall time for the assembly of the compressed
kernel matrices. The different dashed lines indicate
the asymptotics \(N\log^\alpha N\) 
for \(\alpha=0,1,2,3\). For
increasing number 
\(N\) of points and the dimensions \(d=1,2,3\) under
consideration, all computation times approach the
 expected rate 
of \(N\log N\). The right-hand side of
Figure~\ref{fig:compTimesNNZ} shows the average number
of nonzeros per row for an increasing number 
\(N\) of points. This number becomes constant or even
slightly
decreases, as expected.

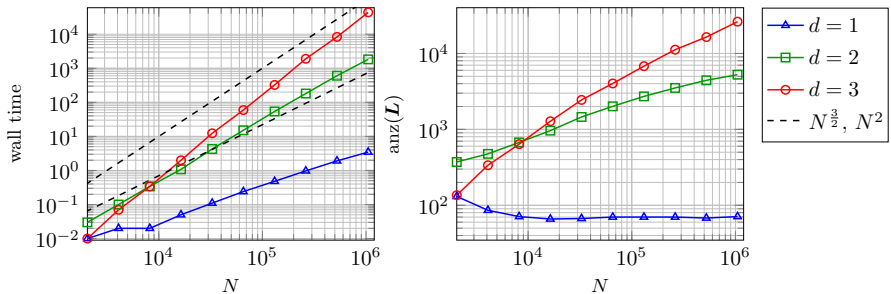
\begin{figure}[htb]
\begin{center}
\scalebox{0.8}{
\begin{tikzpicture}
\begin{loglogaxis}[width=0.42\textwidth,grid=both,
 ymin= 0.9e-2, ymax = 6e4, xmin = 2048, xmax =1.2e6,
legend style={legend pos=south east,font=\small},
 ylabel={\small wall time}, xlabel ={\small $N$},
 ytick = {1e-2, 1e-1, 1e0, 1e1,1e2,1e3,1e4}]
\addplot[line width=0.7pt,color=blue,mark=triangle]
table[x=npts,y=Ltim]{./ResultsNew/matlabLogger1.txt};
\label{pgfplots:plot1D1}
\addplot[line width=0.7pt,color=darkgreen,mark=square]
table[x=npts,y=Ltim]{./ResultsNew/matlabLogger2.txt};
\label{pgfplots:plot2D1}
\addplot[line width=0.7pt,color=red,mark=o]
table[x=npts,y=Ltim]{./ResultsNew/matlabLogger3.txt};
\label{pgfplots:plot3D1}
\addplot[line width=0.7pt,color=black,dashed]
table[x=npts,y expr={0.7e-6 * x^1.5}]{%
./ResultsNew/matlabLogger1.txt};
\addplot[line width=0.7pt,color=black,dashed]
table[x=npts,y expr={0.1e-6 * x^2}]{%
./ResultsNew/matlabLogger1.txt};
\label{pgfplots:asymps1}
\end{loglogaxis}
\begin{loglogaxis}[%
xshift=0.405\textwidth,width=0.42\textwidth,grid=both,
 ymin= 0, ymax = 4e4, xmin = 2048,
 xmax =1.2e6,ytick={1e1, 1e2, 1e3, 1e4},
 legend style={legend pos=south east,font=\small},
  ylabel={\small $\operatorname{anz}({\bs L})$},
  xlabel ={\small $N$}]
\addplot[line width=0.7pt,color=blue,mark=triangle]%
table[x=npts, y = nzL]{./ResultsNew/matlabLogger1.txt};
\addplot[line width=0.7pt,color=darkgreen,mark=square]%
 table[x=npts, y = nzL]{./ResultsNew/matlabLogger2.txt};
\addplot[line width=0.7pt,color=red,mark=o]%
 table[x=npts, y = nzL]{./ResultsNew/matlabLogger3.txt};
\end{loglogaxis}
\matrix[
    matrix of nodes,
    anchor=north west,
    draw,
    inner sep=0.1em,
    column 1/.style={nodes={anchor=center}},
    column 2/.style={nodes={anchor=west},font=\strut},
    draw
  ]
  at([xshift=0.02\textwidth]current axis.north east){
    \ref{pgfplots:plot1D1}& \(d=1\)\\
    \ref{pgfplots:plot2D1}& \(d=2\)\\
    \ref{pgfplots:plot3D1}& \(d=3\)\\
    \ref{pgfplots:asymps1}& \(N^{\frac{3}{2}}\),
    \(N^2\)\\};
\end{tikzpicture}}
\caption{\label{fig:cholTimesNNZ}Computation times
for the
Cholesky factorization (left) and average numbers
of nonzeros 
per row for the Cholesky factor (right) versus
the number 
sample points $N$ in case of the exponential
kernel matrix.}
\end{center}
\end{figure}

Next, we examine the Cholesky factorization of
the compressed
kernel matrix. As the largest eigenvalue of the
kernel matrix
grows proportionally to the number \(N\) of points,
while the smallest eigenvalue is
given by the ridge parameter, the condition number
grows with \(N\) as well.
Hence, to obtain a constant condition number for
increasing \(N\), the ridge parameter needs to be
adjusted accordingly.
However, as we are only interested in the generated
fill-in and the computation times,
we neglect this fact and just fix the ridge parameter
to \(\rho=1\) for all considered \(N\) and \(d=1,2,3\).
The obtained results are found in 
Figure~\ref{fig:cholTimesNNZ}. Herein, on the left-hand 
side, the wall times for the Cholesky factorization of 
the reordered matrix are found. For \(d=1\),
the average number of nonzeros per row becomes constant
when  the number \(N\) of points increases. This indicates
that the kernel function is already fully resolved up
to the threshold parameter on the coarser levels.
\textcolor{red}{For \(d=2\), the observed rate is slightly worse than
 the expected one of \(N^{\frac{3}{2}}\) for the
 Cholesky factorization, which is caused by the
 high connectivity of the associated graph. Asymptotically,
 the expected reate seems to be achieved. 
 Likewise, for \(d=3\), one figures out the rate 
 \(N^{2.3}\) in contrast to the expected rate \(N^2\). 
 This is again caused by the high connectivity 
 of the associated graph.}
 On the right-hand side of the
same figure, it can be seen that the fill-in
remains rather moderate.
A visualization of the matrix patterns for the matrix
\({\bs K}^\Sigma_\varepsilon+\rho{\bs I}\),
the reordered matrix and the Cholesky factor for
\(N=131\,072\) points is
shown in Figure~\ref{fig:patterns}. Each dot
corresponds to a block of
\(256\times 256\) matrix entries and its intensity
indicates the number
of nonzero entries, where darker blocks contain more
entries than lighter blocks.

\begin{figure}[htb]
\begin{center}
\scalebox{0.8}{
\begin{tikzpicture}
\draw(0,4.5) node%
 {\includegraphics[scale=0.31,frame,%
 trim= 0 0 0 13.4,clip]{%
 ./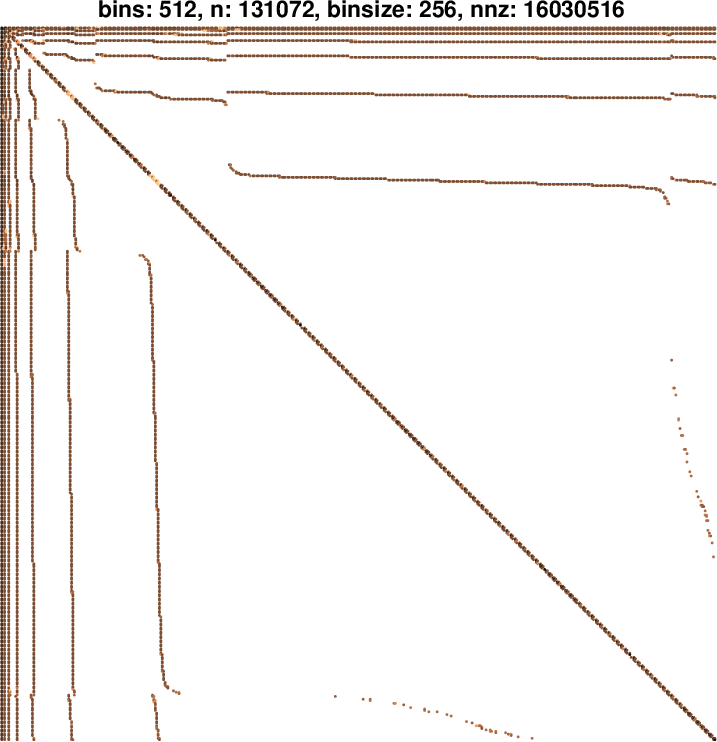}};
\draw(4,4.5) node {\includegraphics[scale=0.31,frame,%
trim= 0 0 0 13.4,clip]{./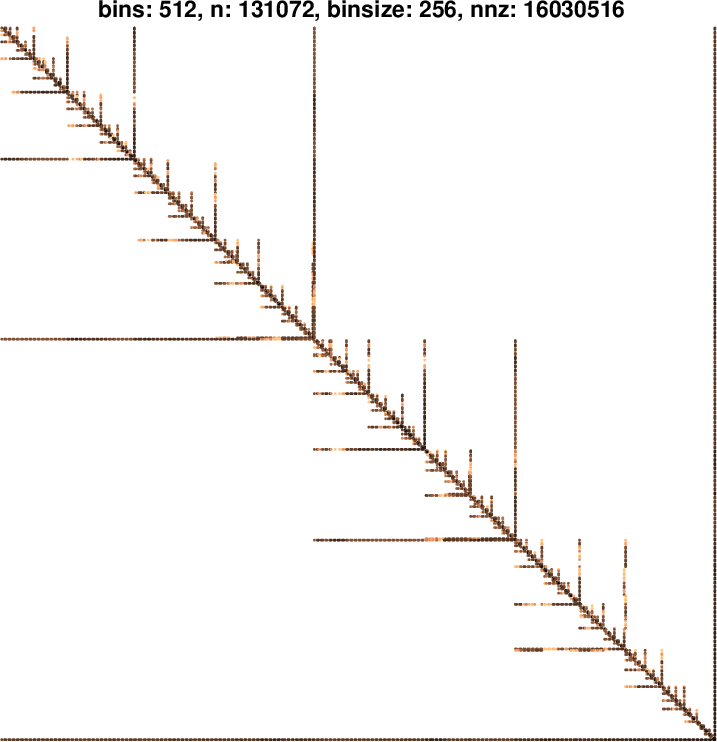}};
\draw(8,4.5) node%
 {\includegraphics[scale=0.31,frame,%
 trim= 0 0 0 13.4,clip]{./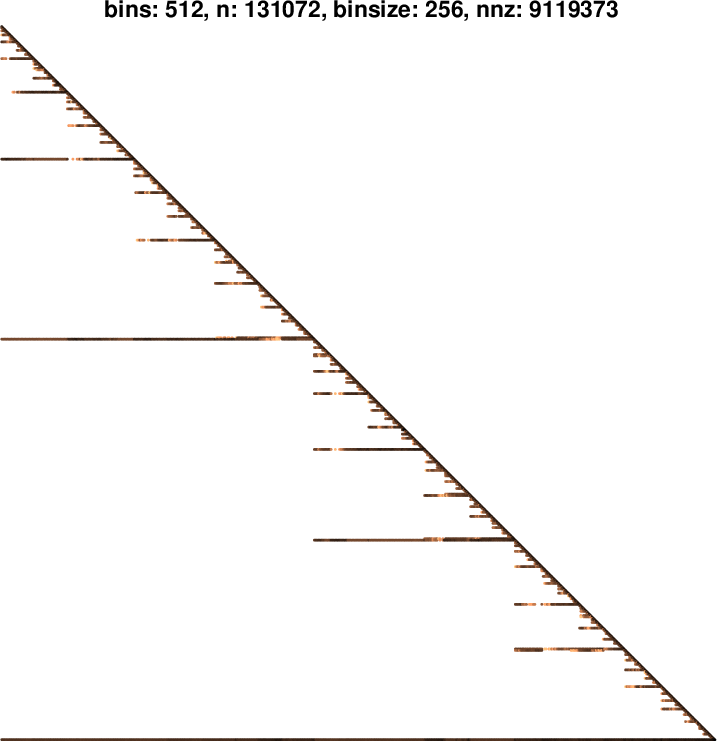}};
\draw(4,6.6) node {$d=1$};
\draw(0,0) node%
 {\includegraphics[scale=0.31,frame,%
 trim=  0 0 0 13.4,clip]{./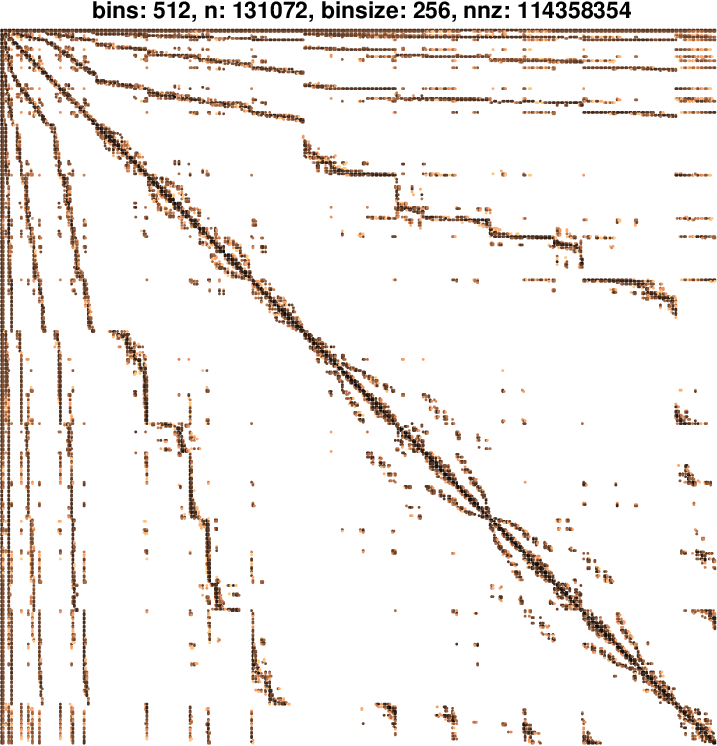}};
\draw(4,0) node%
 {\includegraphics[scale=0.31,frame,%
 trim=  0 0 0 13.4,clip]{./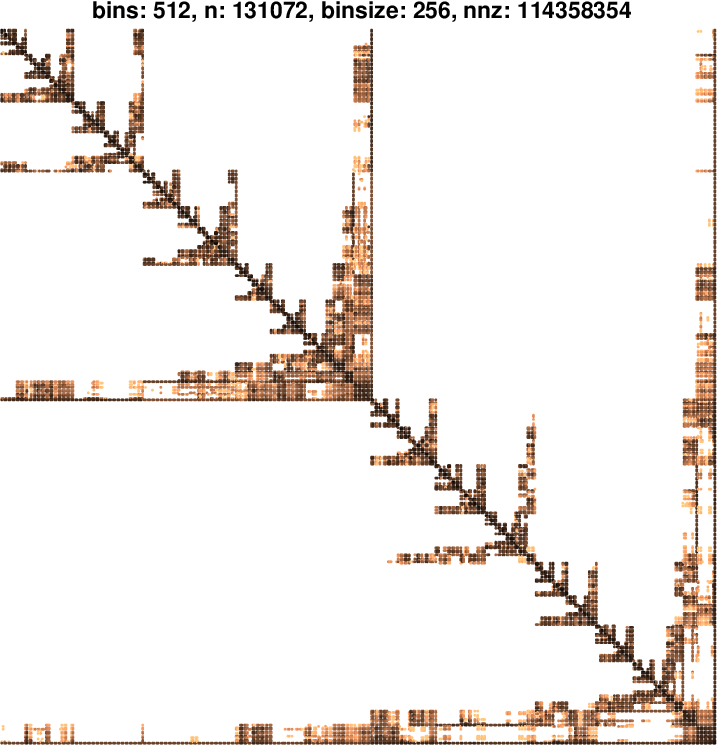}};
\draw(8,0) node%
 {\includegraphics[scale=0.31,frame,%
 trim=   0 0 0 13.4,clip]{./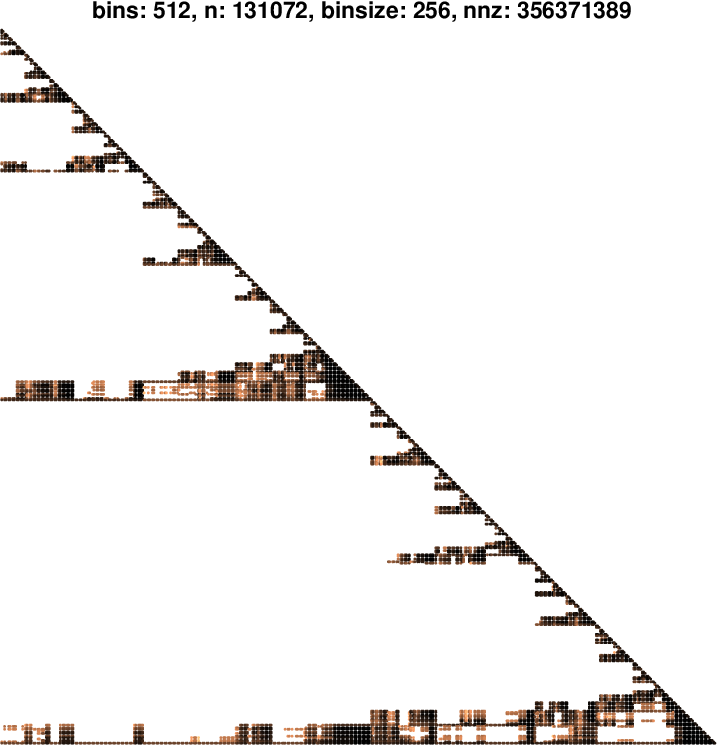}};
\draw(4,2.1) node {$d=2$};
\draw(0,-4.5) node%
 {\includegraphics[scale=0.31,frame,%
 trim= 0 0 0 13.4,clip]{./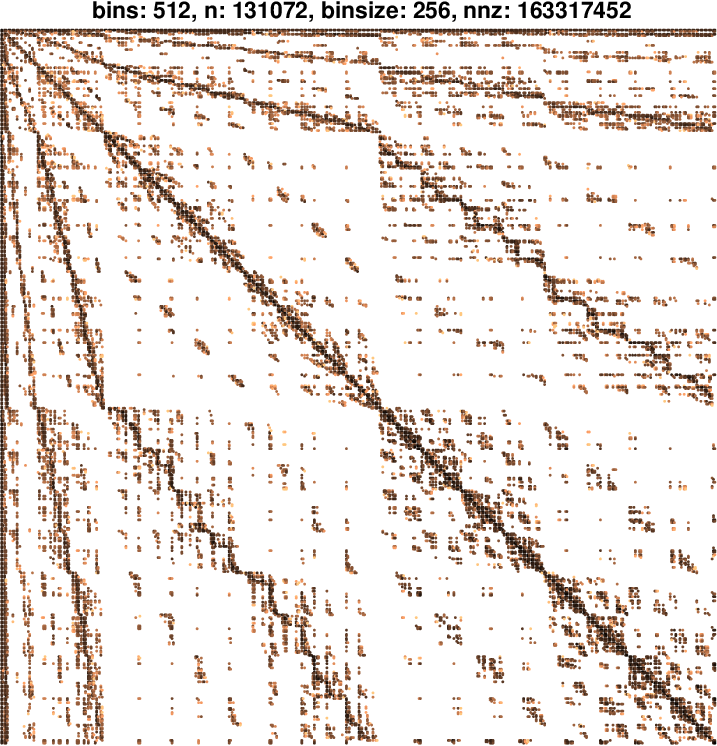}};
\draw(4,-4.5) node%
 {\includegraphics[scale=0.31,frame,%
 trim=  0 0 0 13.4,clip]{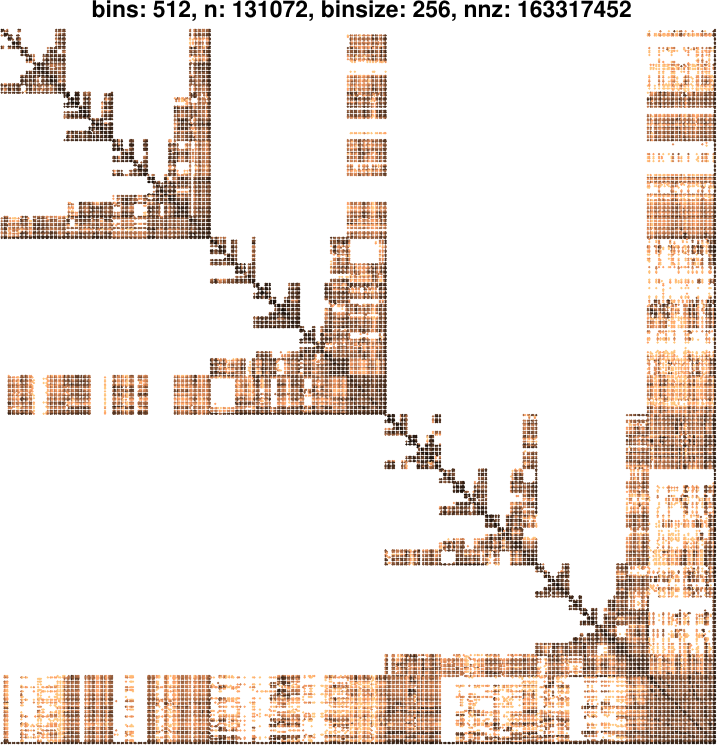}};
\draw(8,-4.5) node%
{\includegraphics[scale=0.31,frame,%
trim=  0 0 0 13.4,clip]{./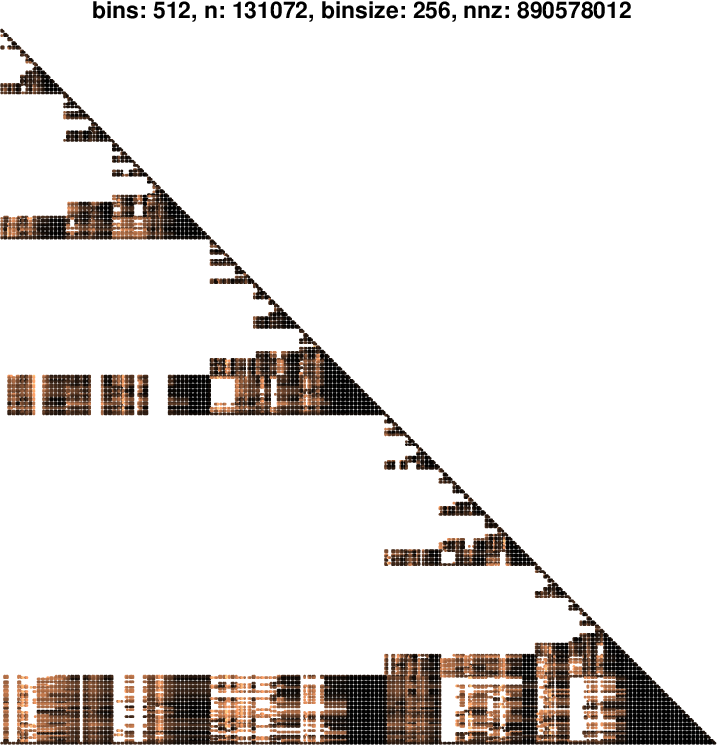}};
\draw(4,-2.4) node {$d=3$};
\end{tikzpicture}}
\caption{\label{fig:patterns}Sparsity pattern of
\({\bs K}^\Sigma_\varepsilon+\rho{\bs I}\) (left),
the reordered matrices (middle) and the Cholesky
 factors \({\bs L}\) (right)
for \(d=1,2,3\) and \(N=131\,072\).}
\end{center}
\end{figure}
\subsection*{Simulation of a Gaussian random field}
As our last example, we consider a Gaussian random
field evaluated at 100\,000 randomly chosen points at
the surface of the Stanford bunny. As before, the
Stanford bunny has been rescaled to have a diameter
of 2.  In order to demonstrate that our approach works
also for larger dimensions, the Stanford bunny has been
embedded into \(\mathbb{R}^4\) and randomly rotated to
prevent axis-aligned bounding boxes. The polynomial
degree for  the \(\Hcal^2\)-matrix representation is
set to 3 as before and likewise we consider \(q+1=3\)
vanishing moments. The covariance function is given by
the exponential kernel 
\[
k({\bs x},{\bs y})=e^{-25\|{\bs x}-{\bs y}\|_2}.
\]
Moreover, we discard all computed matrix entries which
are below the threshold of \(\varepsilon=10^{-6}\). 
The ridge parameter is set to \(\rho=10^{-2}\).
The compressed covariance matrix exhibits
\(\operatorname{anz}({\bs K}^\Sigma_\varepsilon)=6457\)
nonzero matrix entries per row on average, while the
corresponding Cholesky factor exhibits
\(\operatorname{anz}({\bs L})=14\,898\) nonzero
matrix entries per row on average. 
Having the Cholesky factor \({\bs L}\) at hand,
the computation of a realization of the
Gaussian random field is extremely fast, as it only
requires a simple sparse matrix-vector multiplication
of \({\bs L}\) by a Gaussian random vector and an
inverse samplet transform. Four different realizations
of the random field projected
to \(\mathbb{R}^3\) are shown in Figure~\ref{fig:GRF}.

\begin{figure}[htb]
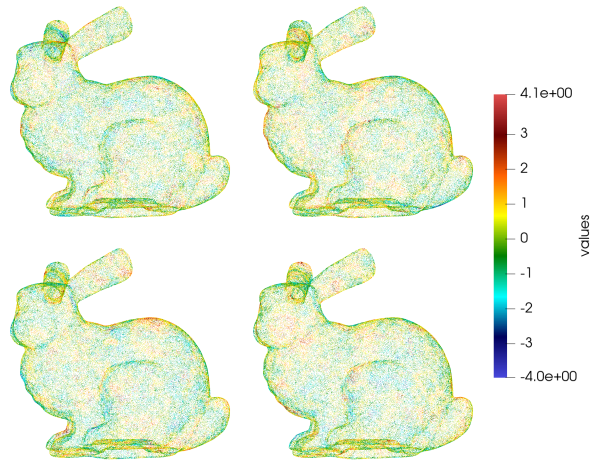

\begin{center}
\scalebox{0.8}{
\begin{tikzpicture}
\draw (0,4) node {\includegraphics[scale=0.08,clip,%
 trim= 840 330 850 400]{./Results/bunnyField1.png}};
\draw (4,4) node {\includegraphics[scale=0.08,clip,%
 trim= 840 330 850 400]{./Results/bunnyField2.png}};
\draw (0,0) node {\includegraphics[scale=0.08,clip,%
 trim= 840 330 850 400]{./Results/bunnyField3.png}};
\draw (4,0) node {\includegraphics[scale=0.08,clip,%
 trim= 840 330 850 400]{./Results/bunnyField4.png}};
\draw (7,2) node {\includegraphics[scale=0.2,clip,%
 trim= 2260 450 450 750]{./Results/bunnyField4.png}};
\end{tikzpicture}}
\caption{\label{fig:GRF}Four different realizations of
a Gaussian random field based on an exponential covariance kernel.}
\end{center}
\end{figure}

\section{Conclusion}\label{sec:Conclusion}
Samplets provide a new methodology for the analysis
of large data sets. They are easy to construct and
discrete  data can be transformed into the samplet
basis in linear cost. In our construction, we
deliberately let out the discussion of a level
dependent compression of the given data, as it 
is known from wavelet analysis, in favor of a robust
error analysis. We emphasize however that, under the
assumption  of uniformly distributed points, different
norms can be incorporated, allowing for the
construction of band-pass filters and level dependent
thresholding. In this situation, also an improved
samplet matrix compression is possible 
such that a fixed number of vanishing moments is
sufficient to achieve a precision proportional to the
fill distance with log-linear cost.

Besides data compression, detection of singularities 
and adaptivity, we have demonstrated how samplets can
be employed for the compression kernel matrices to
obtain an essentially sparse matrix. Having a sparse
representation of the kernel matrix, algebraic
operations, such as matrix vector multiplications
can considerably be sped up. Moreover, in combination 
with a fill-in reducing reordering, the factorization
of the compressed kernel matrices becomes
computationally feasible, which allows for the fast
application of the inverse kernel matrix on the one
hand and the efficient solution of linear systems
involving the kernel matrix on the other hand. The
numerical results, featuring about \(10^6\) data points
in up to four dimensions, demonstrate the capabilities
of samplets.

Future research will be directed to 
the extension of samplets towards high-dimensional
 data.
This extension requires the incorporation of different
clustering strategies, such as locality sensitive
 hashing,
to obtain a manifold-aware cluster tree and the careful
construction for the vanishing moments, for example 
by anisotropic polynomials.

\bibliographystyle{plain}

\end{document}